\crefname{hypothesis}{Hypothesis}{Hypotheses}
\DeclareMathOperator*{\Null}{null}
\DeclareMathOperator*{\Range}{range}
\let\s=\scriptscriptstyle
\title{{A new analytical framework for the convergence of inexact two-grid methods}\thanks{Submitted to the editors DATE.
\funding{The research of the second author was partially supported by the National Key R\&D Program of China grants 2020YFA0711900, 2020YFA0711904, and the National Science Foundation of China grant 11971472.}}}
\author{Xuefeng Xu\thanks{Corresponding author.~Department of Mathematics, Purdue University, West Lafayette, IN 47907, USA (\email{xuxuefeng@lsec.cc.ac.cn}, \email{xu1412@purdue.edu}).}
\and Chen-Song Zhang\thanks{LSEC \& NCMIS, Academy of Mathematics and Systems Science, Chinese Academy of Sciences, Beijing 100190, China, and School of Mathematical Sciences, University of Chinese Academy of Sciences, Beijing 100049, China (\email{zhangcs@lsec.cc.ac.cn}).}}
\begin{document}

\maketitle

\begin{abstract}
Two-grid methods with exact solution of the Galerkin coarse-grid system have been well studied by the multigrid community: an elegant identity has been established to characterize the convergence factor of exact two-grid methods. In practice, however, it is often too costly to solve the Galerkin coarse-grid system exactly, especially when its size is large. Instead, without essential loss of convergence speed, one may solve the coarse-grid system approximately. In this paper, we develop a new framework for analyzing the convergence of inexact two-grid methods: two-sided bounds for the energy norm of the error propagation matrix of inexact two-grid methods are presented. In the framework, a restricted smoother involved in the identity for exact two-grid convergence is used to measure how far the actual coarse-grid matrix deviates from the Galerkin one. As an application, we establish a unified convergence theory for multigrid methods.
\end{abstract}

\begin{keywords}
multigrid, inexact two-grid methods, convergence factor, two-sided bounds
\end{keywords}

\begin{AMS}
65F08, 65F10, 65N55, 15A18
\end{AMS}

\section{Introduction}

Multigrid is one of the most efficient methods for solving large-scale linear systems that arise from discretized partial differential equations. It has been shown to be a powerful solver, with linear or near-linear computational complexity, for a large class of linear systems; see, e.g.,~\cite{Hackbusch1985,Briggs2000,Trottenberg2001,Vassilevski2008}. Such a desirable property stems from combining two complementary error-reduction processes: \textit{smoothing} (or \textit{relaxation}) and \textit{coarse-grid correction}. In multigrid methods, these two processes will be applied iteratively until a desired residual tolerance is achieved.

Consider solving the linear system
\begin{equation}\label{system}
A\mathbf{u}=\mathbf{f},
\end{equation}
where $A\in\mathbb{R}^{n\times n}$ is symmetric positive definite (SPD), $\mathbf{u}\in\mathbb{R}^{n}$, and $\mathbf{f}\in\mathbb{R}^{n}$. Let $M$ be an $n\times n$ nonsingular matrix such that $M+M^{T}-A$ is SPD. Given an initial guess $\mathbf{u}^{(0)}\in\mathbb{R}^{n}$, we perform the following smoothing process:
\begin{equation}\label{relax}
\mathbf{u}^{(k+1)}=\mathbf{u}^{(k)}+M^{-1}\big(\mathbf{f}-A\mathbf{u}^{(k)}\big) \quad k=0,1,\ldots
\end{equation}
The process~\cref{relax} is typically a simple iterative method, such as the (weighted) Jacobi and Gauss--Seidel iterations. In general,~\cref{relax} is efficient at eliminating high-frequency (i.e., oscillatory) error, while low-frequency (i.e., smooth) error cannot be eliminated effectively; see, e.g.,~\cite{Briggs2000,Trottenberg2001}. To further reduce the low-frequency error, a coarse-grid correction strategy is used in multigrid methods. Let $\mathbf{u}^{(\ell)}\in\mathbb{R}^{n}$ be an approximation to the exact solution $\mathbf{u}\equiv A^{-1}\mathbf{f}$, and let $P\in\mathbb{R}^{n\times n_{\rm c}}$ be a \textit{prolongation} (or \textit{interpolation}) matrix with full column rank, where $n_{\rm c} \ (<n)$ is the number of coarse variables. Then, the (exact) coarse-grid correction can be described by
\begin{equation}\label{correction}
\mathbf{u}^{(\ell+1)}=\mathbf{u}^{(\ell)}+PA_{\rm c}^{-1}P^{T}\big(\mathbf{f}-A\mathbf{u}^{(\ell)}\big),
\end{equation}
where $A_{\rm c}:=P^{T}AP\in\mathbb{R}^{n_{\rm c}\times n_{\rm c}}$ is known as the \textit{Galerkin coarse-grid matrix}. Define
\begin{equation}\label{piA}
\varPi_{A}:=PA_{\rm c}^{-1}P^{T}A.
\end{equation}
From~\cref{correction}, we have
\begin{displaymath}
\mathbf{u}-\mathbf{u}^{(\ell+1)}=(I-\varPi_{A})\big(\mathbf{u}-\mathbf{u}^{(\ell)}\big).
\end{displaymath}
Since $I-\varPi_{A}$ is a projection along (or parallel to) $\Range(P)$ onto $\Null(P^{T}A)$, it follows that
\begin{displaymath}
(I-\varPi_{A})\mathbf{e}=0 \quad\forall\,\mathbf{e}\in\Range(P).
\end{displaymath}
That is, an efficient coarse-grid correction will be achieved if the coarse space $\Range(P)$ can accurately cover the low-frequency error.

Combining~\cref{relax} and~\cref{correction} yields a two-grid procedure, which is the fundamental module of multigrid methods. A symmetric two-grid scheme (i.e., the presmoothing and postsmoothing processes are performed in a symmetric way) for solving~\cref{system} can be described by~\cref{alg:TG}. If $B_{\rm c}$ in~\cref{alg:TG} is taken to be $A_{\rm c}$, then the algorithm is called an \textit{exact} two-grid method; otherwise, it is called an \textit{inexact} two-grid method. In particular, if $\hat{\mathbf{e}}_{\rm c}=0$, then~\cref{alg:TG} contains only two smoothing steps, in which case the convergence factor is $1-\lambda_{\min}(\widetilde{M}^{-1}A)<1$, where $\widetilde{M}$ is defined by~\cref{tildM}.

\begin{algorithm}[!htbp]

\caption{\ Two-grid method.}\label{alg:TG}

\smallskip

\begin{algorithmic}[1]

\State Presmoothing: $\mathbf{u}^{(1)}\gets\mathbf{u}^{(0)}+M^{-1}\big(\mathbf{f}-A\mathbf{u}^{(0)}\big)$ \Comment{$M+M^{T}-A\in\mathbb{R}^{n\times n}$ is SPD}

\smallskip

\State Restriction: $\mathbf{r}_{\rm c}\gets P^{T}\big(\mathbf{f}-A\mathbf{u}^{(1)}\big)$ \Comment{$P\in\mathbb{R}^{n\times n_{\rm c}}$ has full column rank}

\smallskip

\State Coarse-grid correction: $\hat{\mathbf{e}}_{\rm c}\gets B_{\rm c}^{-1}\mathbf{r}_{\rm c}$ \Comment{$B_{\rm c}\in\mathbb{R}^{n_{\rm c}\times n_{\rm c}}$ is SPD}

\smallskip

\State Prolongation: $\mathbf{u}^{(2)}\gets\mathbf{u}^{(1)}+P\hat{\mathbf{e}}_{\rm c}$

\smallskip

\State Postsmoothing: $\mathbf{u}_{\rm ITG}\gets\mathbf{u}^{(2)}+M^{-T}\big(\mathbf{f}-A\mathbf{u}^{(2)}\big)$

\smallskip

\end{algorithmic}

\end{algorithm}

The convergence theory of exact two-grid methods has been well developed (see, e.g.,~\cite{Falgout2005,Zikatanov2008,Notay2015,XZ2017}), and the convergence factor of exact two-grid methods can be characterized by an elegant identity~\cite{XZ2002,Falgout2005}. In practice, however, it is often too costly to solve the linear system $A_{\rm c}\mathbf{e}_{\rm c}=\mathbf{r}_{\rm c}$ exactly, especially when its size is large. Moreover, the Galerkin coarse-grid matrix may affect the parallel efficiency of algebraic multigrid methods~\cite{Falgout2014}. Instead, without essential loss of convergence speed, one may solve the coarse-grid system approximately (see, e.g.,~\cite{Haber2007,Babich2010,Gee2011,Brannick2016}) or find a cheap alternative to $A_{\rm c}$ (see, e.g.,~\cite{Brandt2000,Sterck2006,Sterck2008,Falgout2014,Treister2015,Bienz2016}). A typical strategy is to apply~\cref{alg:TG} recursively in the correction steps. The resulting multigrid method can be viewed as an inexact two-grid scheme. This enables us to analyze the convergence of multigrid methods via inexact two-grid theory.

In~\cite{Notay2007}, an upper bound for the convergence factor of~\cref{alg:TG} was presented, which has been successfully applied to the convergence analysis of W-cycle multigrid methods. Nevertheless, the two-grid estimate in~\cite{Notay2007} is not sharp in some situations; see~\cref{rek:Notay} and~\cref{example}. Recently, we established an improved convergence theory for~\cref{alg:TG} in~\cite{XXF2022}, the basic idea of which is to measure the accuracy of the coarse solver by using the extreme eigenvalues of $B_{\rm c}^{-1}A_{\rm c}$.

In this paper, we develop a novel framework for analyzing the convergence of~\cref{alg:TG}: two-sided bounds for the energy norm of the error propagation matrix of~\cref{alg:TG} are presented, from which one can easily obtain the identity for exact two-grid convergence. In the framework, the restricted smoother $P^{T}\widetilde{M}P$ is used to measure the deviation $B_{\rm c}-A_{\rm c}$, which will be readily available if $B_{\rm c}$ is a sparsification of $A_{\rm c}$. Such an idea is completely different from that in~\cite{XXF2022}. Indeed, it is inspired by an explicit expression for the inexact two-grid preconditioner (see~\cref{lem:iTG1}). As an application of the framework, we establish a unified convergence theory for multigrid methods, which allows the coarsest-grid system to be solved approximately.

The rest of this paper is organized as follows. In~\cref{sec:pre}, we introduce some results on the convergence of two-grid methods and a useful tool for eigenvalue analysis. In~\cref{sec:iTG}, we present an analytical framework for the convergence of~\cref{alg:TG}. Based on the proposed framework, we establish a unified convergence theory for multigrid methods in~\cref{sec:MG}. In~\cref{sec:con}, we give some concluding remarks.

\section{Preliminaries} \label{sec:pre}

In this section, we review some results on the convergence of two-grid methods and the well-known Weyl's theorem. For convenience, we list some notation used in the subsequent discussions.

\begin{itemize}

\item[--] $I_{n}$ denotes the $n\times n$ identity matrix (or $I$ when its size is clear from context).

\item[--] $\lambda_{\min}(\cdot)$ and $\lambda_{\max}(\cdot)$ stand for the smallest and largest eigenvalues of a matrix, respectively.

\item[--] $\lambda(\cdot)$ denotes the spectrum of a matrix.

\item[--] $\rho(\cdot)$ denotes the spectral radius of a matrix.

\item[--] $\langle\cdot,\cdot\rangle$ denotes the standard Euclidean inner product of two vectors.

\item[--] $\|\cdot\|_{2}$ denotes the spectral norm of a matrix.

\item[--] $\|\cdot\|_{A}$ denotes the energy norm induced by an SPD matrix $A\in\mathbb{R}^{n\times n}$: for any $\mathbf{v}\in\mathbb{R}^{n}$, $\|\mathbf{v}\|_{A}=\langle A\mathbf{v},\mathbf{v}\rangle^{\frac{1}{2}}$; for any $B\in\mathbb{R}^{n\times n}$, $\|B\|_{A}=\max_{\mathbf{v}\in\mathbb{R}^{n}\backslash\{0\}}\frac{\|B\mathbf{v}\|_{A}}{\|\mathbf{v}\|_{A}}$.

\end{itemize}

From~\cref{relax}, we have
\begin{displaymath}
\mathbf{u}-\mathbf{u}^{(k+1)}=(I-M^{-1}A)\big(\mathbf{u}-\mathbf{u}^{(k)}\big),
\end{displaymath}
which leads to
\begin{displaymath}
\big\|\mathbf{u}-\mathbf{u}^{(k)}\big\|_{A}\leq\|I-M^{-1}A\|_{A}^{k}\big\|\mathbf{u}-\mathbf{u}^{(0)}\big\|_{A}.
\end{displaymath}
For any initial guess $\mathbf{u}^{(0)}\in\mathbb{R}^{n}$, if $\|I-M^{-1}A\|_{A}<1$, then
\begin{displaymath}
\lim_{k\rightarrow+\infty}\big\|\mathbf{u}-\mathbf{u}^{(k)}\big\|_{A}=0.
\end{displaymath}
Such a smoother $M$ (i.e., $\|I-M^{-1}A\|_{A}<1$) is said to be \textit{$A$-convergent}, which is in fact equivalent to the positive definiteness of $M+M^{T}-A$; see, e.g.,~\cite[Proposition~3.8]{Vassilevski2008}.

For an $A$-convergent smoother $M$, we define two \textit{symmetrized} variants:
\begin{subequations}
\begin{align}
\overline{M}&:=M(M+M^{T}-A)^{-1}M^{T},\label{barM}\\
\widetilde{M}&:=M^{T}(M+M^{T}-A)^{-1}M.\label{tildM}
\end{align}
\end{subequations}
It is easy to verify that
\begin{subequations}
\begin{align}
I-\overline{M}^{-1}A&=(I-M^{-T}A)(I-M^{-1}A),\label{rela-barM}\\
I-\widetilde{M}^{-1}A&=(I-M^{-1}A)(I-M^{-T}A),\label{rela-tildM}
\end{align}
\end{subequations}
from which one can easily deduce that both $\overline{M}-A$ and $\widetilde{M}-A$ are symmetric positive semidefinite (SPSD).

The \textit{iteration matrix} (or \textit{error propagation matrix}) of~\cref{alg:TG} is
\begin{equation}\label{tE_TG1}
E_{\rm ITG}=(I-M^{-T}A)(I-PB_{\rm c}^{-1}P^{T}A)(I-M^{-1}A),
\end{equation}
which satisfies
\begin{displaymath}
\mathbf{u}-\mathbf{u}_{\rm ITG}=E_{\rm ITG}\big(\mathbf{u}-\mathbf{u}^{(0)}\big).
\end{displaymath}
The iteration matrix $E_{\rm ITG}$ can be expressed as
\begin{equation}\label{tE_TG2}
E_{\rm ITG}=I-B_{\rm ITG}^{-1}A,
\end{equation}
where
\begin{equation}\label{inv_tB_TG}
B_{\rm ITG}^{-1}=\overline{M}^{-1}+(I-M^{-T}A)PB_{\rm c}^{-1}P^{T}(I-AM^{-1}).
\end{equation}
Since $\overline{M}$ and $B_{\rm c}$ are SPD, it follows that $B_{\rm ITG}$ is an SPD matrix, which is called the \textit{inexact two-grid preconditioner}. By~\cref{tE_TG2}, we have
\begin{equation}\label{norm_tE_TG}
\|E_{\rm ITG}\|_{A}=\rho(E_{\rm ITG})=\max\bigg\{\frac{1}{\lambda_{\min}(A^{-1}B_{\rm ITG})}-1,\,1-\frac{1}{\lambda_{\max}(A^{-1}B_{\rm ITG})}\bigg\},
\end{equation}
which is referred to as the \textit{convergence factor} of~\cref{alg:TG}.

In particular, if $B_{\rm c}=A_{\rm c}$, the corresponding iteration matrix is denoted by $E_{\rm TG}$, which takes the form
\begin{equation}\label{E_TG}
E_{\rm TG}=(I-M^{-T}A)(I-\varPi_{A})(I-M^{-1}A)=I-B_{\rm TG}^{-1}A,
\end{equation}
where $\varPi_{A}$ is defined by~\cref{piA} and
\begin{equation}\label{invB_TG}
B_{\rm TG}^{-1}=\overline{M}^{-1}+(I-M^{-T}A)PA_{\rm c}^{-1}P^{T}(I-AM^{-1}).
\end{equation}
The SPD matrix $B_{\rm TG}$ is called the \textit{exact two-grid preconditioner}.

The following theorem provides an elegant identity for $\|E_{\rm TG}\|_{A}$~\cite[Theorem~4.3]{Falgout2005}, which is a two-level version of the XZ-identity~\cite{XZ2002,Zikatanov2008}.

\begin{theorem}
Let $\widetilde{M}$ be defined by~\cref{tildM}, and define
\begin{equation}\label{piM}
\varPi_{\widetilde{M}}:=P(P^{T}\widetilde{M}P)^{-1}P^{T}\widetilde{M}.
\end{equation}
Then, the convergence factor of~\cref{alg:TG} with $B_{\rm c}=A_{\rm c}$ can be characterized as
\begin{equation}\label{XZ}
\|E_{\rm TG}\|_{A}=1-\frac{1}{K_{\rm TG}},
\end{equation}
where
\begin{equation}\label{K_TG}
K_{\rm TG}=\max_{\mathbf{v}\in\mathbb{R}^{n}\backslash\{0\}}\frac{\big\|\big(I-\varPi_{\widetilde{M}}\big)\mathbf{v}\big\|_{\widetilde{M}}^{2}}{\|\mathbf{v}\|_{A}^{2}}.
\end{equation}
\end{theorem}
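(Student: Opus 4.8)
The plan is to collapse the identity to a single eigenvalue statement about the exact two-grid preconditioner $B_{\rm TG}$ and then to match the resulting extremal Rayleigh quotient with $K_{\rm TG}$. First I would record the structure of $E_{\rm TG}$ in~\cref{E_TG}. Writing $R:=I-M^{-1}A$, its $A$-adjoint is $R^{*}=I-M^{-T}A$, while $I-\varPi_{A}$ (with $\varPi_{A}$ from~\cref{piA}) is $A$-self-adjoint and idempotent; hence $E_{\rm TG}=R^{*}(I-\varPi_{A})R=C^{*}C$ with $C:=(I-\varPi_{A})R$, so $E_{\rm TG}$ is self-adjoint and positive semidefinite in the $A$-inner product. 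Since $E_{\rm TG}=I-B_{\rm TG}^{-1}A$ with $B_{\rm TG}$ SPD, its spectrum lies in $[0,1)$, whence $\|E_{\rm TG}\|_{A}=\rho(E_{\rm TG})=\lambda_{\max}(E_{\rm TG})=1-\lambda_{\min}(B_{\rm TG}^{-1}A)=1-1/\lambda_{\max}(A^{-1}B_{\rm TG})$. It therefore remains to prove $\lambda_{\max}(A^{-1}B_{\rm TG})=K_{\rm TG}$.

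Next I would cast both sides in comparable variational form. Minimizing over the coarse variable in~\cref{K_TG} and using that $\varPi_{\widetilde M}$ in~\cref{piM} is the $\widetilde M$-orthogonal projection onto $\Range(P)$ gives $K_{\rm TG}=\max_{\mathbf v}\min_{\mathbf v_{\rm c}}\|\mathbf v-P\mathbf v_{\rm c}\|_{\widetilde M}^{2}/\|\mathbf v\|_{A}^{2}$. For the left-hand side, $\lambda_{\max}(A^{-1}B_{\rm TG})=\max_{\mathbf w}\langle B_{\rm TG}\mathbf w,\mathbf w\rangle/\langle A\mathbf w,\mathbf w\rangle$, and the explicit form~\cref{invB_TG} exhibits $B_{\rm TG}^{-1}$ as a sum of two terms $R_{0}\overline M^{-1}R_{0}^{T}+R_{1}A_{\rm c}^{-1}R_{1}^{T}$ with $R_{0}=I$ and $R_{1}=(I-M^{-T}A)P$. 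The standard Lagrange-duality identity for the inverse of such a sum then yields $\langle B_{\rm TG}\mathbf w,\mathbf w\rangle=\min\{\langle\overline M\mathbf w_{0},\mathbf w_{0}\rangle+\langle A_{\rm c}\mathbf w_{\rm c},\mathbf w_{\rm c}\rangle:\mathbf w=\mathbf w_{0}+(I-M^{-T}A)P\mathbf w_{\rm c}\}$, a clean max--min companion to the expression for $K_{\rm TG}$.

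The hard part will be bridging the two representations, since the splitting attached to $B_{\rm TG}$ (smoother $\overline M$, shifted prolongation $(I-M^{-T}A)P$, and an extra coarse term $\langle A_{\rm c}\mathbf w_{\rm c},\mathbf w_{\rm c}\rangle$) differs from the symmetric splitting (smoother $\widetilde M$, plain prolongation $P$, no extra term) appearing in $K_{\rm TG}$. The key algebraic inputs I would use are $\overline M=(MM^{-T})\widetilde M(MM^{-T})^{T}$ (from~\cref{barM}--\cref{tildM}) and $I-\widetilde M^{-1}A=(I-M^{-1}A)(I-M^{-T}A)$ (from~\cref{rela-tildM}), together with $A_{\rm c}=P^{T}AP$. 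A convenient equivalent route is to pass to the residual form $\lambda_{\max}(A^{-1}B_{\rm TG})=\max_{\mathbf s}\langle A^{-1}\mathbf s,\mathbf s\rangle/\langle B_{\rm TG}^{-1}\mathbf s,\mathbf s\rangle$, substitute $\mathbf s=A\mathbf v$ to normalize the denominator to $\|\mathbf v\|_{A}^{2}$, rewrite $1/K_{\rm TG}$ — after splitting $\mathbf v$ into its $\widetilde M$-projection and its $\widetilde M$-orthogonal complement $\mathbf q$ — as $\min_{0\neq\mathbf q\,\perp_{\widetilde M}\,\Range(P)}\|(I-\varPi_{A})\mathbf q\|_{A}^{2}/\|\mathbf q\|_{\widetilde M}^{2}$, and then show this equals $\lambda_{\min}(B_{\rm TG}^{-1}A)$. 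The main obstacle is precisely this reconciliation: the two extremal problems run over different constraint sets and weight the coarse correction differently, so one cannot hope for a pointwise identity of the quadratic forms (indeed $\langle B_{\rm TG}\mathbf v,\mathbf v\rangle\neq\|(I-\varPi_{\widetilde M})\mathbf v\|_{\widetilde M}^{2}$ already for $\mathbf v\in\Range(P)$); proving that the extremal values nonetheless coincide is where the smoother identities above must be used to convert the $\overline M$-based decomposition into the $\widetilde M$-based one and absorb the extra coarse term.
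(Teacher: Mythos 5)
The paper does not prove this theorem; it quotes it from the literature (it is \cite[Theorem~4.3]{Falgout2005}, the two-level XZ-identity), so there is no in-paper proof to compare against line by line. Judged on its own, your first half is correct and complete: $E_{\rm TG}=C^{*}C$ with $C=(I-\varPi_{A})(I-M^{-1}A)$ is $A$-self-adjoint and positive semidefinite, its spectrum lies in $[0,1)$, and hence $\|E_{\rm TG}\|_{A}=1-1/\lambda_{\max}(A^{-1}B_{\rm TG})$; the Lagrange-duality representation of $\langle B_{\rm TG}\mathbf w,\mathbf w\rangle$ attached to the splitting~\cref{invB_TG} is also a correct standard fact, as is your max--min rewriting of $K_{\rm TG}$.

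The genuine gap is that the central identity $\lambda_{\max}(A^{-1}B_{\rm TG})=K_{\rm TG}$ is never actually proved: your final paragraph names it ``the hard part,'' lists the algebraic ingredients you would use, and stops. As written, the two extremal problems are left unreconciled, so the proof is incomplete precisely where the content of the theorem lies. I would also push back on the framing that ``one cannot hope for a pointwise identity of the quadratic forms.'' There \emph{is} one, and it short-circuits the duality detour entirely: from~\cref{lem:iTG1} with $B_{\rm c}=A_{\rm c}$ (i.e.,~\cref{B_TG}) and the $\widetilde{M}$-orthogonality of $\varPi_{\widetilde{M}}$,
\begin{displaymath}
\langle B_{\rm TG}\mathbf{v},\mathbf{v}\rangle=\|\mathbf{v}\|_{A}^{2}+\big\|\big(I-\varPi_{\widetilde{M}}\big)(I-M^{-1}A)\mathbf{v}\big\|_{\widetilde{M}}^{2}
\quad\forall\,\mathbf{v}\in\mathbb{R}^{n},
\end{displaymath}
so that $\lambda_{\max}(A^{-1}B_{\rm TG})=1+\lambda_{\max}\big((A^{-1}\widetilde{M}-I)(I-\varPi_{\widetilde{M}})\big)$ after a similarity transformation using~\cref{rela-tildM}, while $K_{\rm TG}=\lambda_{\max}\big(A^{-1}\widetilde{M}(I-\varPi_{\widetilde{M}})\big)$. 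The remaining step, $1+\lambda_{\max}\big((A^{-1}\widetilde{M}-I)(I-\varPi_{\widetilde{M}})\big)=\lambda_{\max}\big(A^{-1}\widetilde{M}(I-\varPi_{\widetilde{M}})\big)$, is not automatic (the perturbation is $-(I-\varPi_{\widetilde{M}})$, not $-I$), but it follows from the block-triangularization of these products in a basis diagonalizing $\varPi_{\widetilde{M}}$, exactly as in the proof of~\cref{lem:iTG2}, together with $K_{\rm TG}\geq 1$. Supplying that eigenvalue bookkeeping is what your argument is missing; without it, neither your $\overline{M}$-based min--max route nor the residual-form route you sketch reaches the conclusion.
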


\begin{remark}
It is easy to verify that $\varPi_{\widetilde{M}}^{2}=\varPi_{\widetilde{M}}$, $\Range(\varPi_{\widetilde{M}})=\Range(P)$, and~$\varPi_{\widetilde{M}}$ is self-adjoint with respect to the inner product $\langle\cdot,\cdot\rangle_{\widetilde{M}}:=\langle\widetilde{M}\cdot,\cdot\rangle$. That is, $\varPi_{\widetilde{M}}$ is an $\widetilde{M}$-orthogonal projection onto the coarse space $\Range(P)$; see, e.g.,~\cite{Falgout2005}.
\end{remark}

\begin{remark}
From~\cref{E_TG}, we deduce that $A^{\frac{1}{2}}E_{\rm TG}A^{-\frac{1}{2}}$ is a symmetric matrix with smallest eigenvalue $0$. Since
\begin{displaymath}
A^{\frac{1}{2}}E_{\rm TG}A^{-\frac{1}{2}}=I-A^{\frac{1}{2}}B_{\rm TG}^{-1}A^{\frac{1}{2}},
\end{displaymath}
we obtain that $B_{\rm TG}-A$ is SPSD and $\lambda_{\max}\big(B_{\rm TG}^{-1}A\big)=1$. Furthermore,
\begin{equation}\label{1/K}
\lambda_{\min}\big(B_{\rm TG}^{-1}A\big)=1-\lambda_{\max}(E_{\rm TG})=1-\|E_{\rm TG}\|_{A}=\frac{1}{K_{\rm TG}}.
\end{equation}
Hence,
\begin{displaymath}
K_{\rm TG}=\frac{\lambda_{\max}\big(B_{\rm TG}^{-1}A\big)}{\lambda_{\min}\big(B_{\rm TG}^{-1}A\big)},
\end{displaymath}
that is, $K_{\rm TG}$ is the corresponding condition number when~\cref{alg:TG} with $B_{\rm c}=A_{\rm c}$ is applied as a preconditioning method.
\end{remark}

The identity~\cref{XZ} is a powerful tool for analyzing two-grid methods (see, e.g.,~\cite{Falgout2005,XZ2017,Brannick2018,XXF2018}), which reflects the interplay between smoother and coarse space. The conventional strategy of designing algebraic multigrid methods is to fix a simple smoother (like the Jacobi and Gauss--Seidel types) and then optimize the choice of coarse space. Alternatively, one may fix a coarse space and then optimize the choice of smoother. It is also possible to optimize them simultaneously~\cite{XZ2017}.

Compared with the exact two-grid case, the difficulty of inexact two-grid analysis is increased by the fact that the middle term in~\cref{tE_TG1}, $I-PB_{\rm c}^{-1}P^{T}A$, is no longer a projection. Based on the idea of hierarchical basis~\cite{Bank1988} and the minimization property of Schur complements (see, e.g.,~\cite[Theorem~3.8]{Axelsson1994}), Notay~\cite{Notay2007} derived an upper bound for the convergence factor $\|E_{\rm ITG}\|_{A}$, as described in the following theorem.

\begin{theorem}
Under the assumptions of~\cref{alg:TG}, it holds that
\begin{equation}\label{Notay}
\|E_{\rm ITG}\|_{A}\leq\max\Bigg\{1-\frac{\min\big\{1,\,\lambda_{\min}(B_{\rm c}^{-1}A_{\rm c})\big\}}{K_{\rm TG}},\,\max\big\{1,\,\lambda_{\max}(B_{\rm c}^{-1}A_{\rm c})\big\}-1\Bigg\},
\end{equation}
where $K_{\rm TG}$ is given by~\cref{K_TG}.
\end{theorem}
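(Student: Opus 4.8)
The plan is to convert the claim into a pair of one-sided eigenvalue estimates for $B_{\rm ITG}^{-1}A$ and then to obtain these from a Loewner (semidefinite) comparison of $B_{\rm ITG}^{-1}$ with the exact preconditioner $B_{\rm TG}^{-1}$. First I would rewrite the convergence factor \cref{norm_tE_TG} in the equivalent form
\begin{equation*}
\|E_{\rm ITG}\|_{A}=\max\big\{\lambda_{\max}(B_{\rm ITG}^{-1}A)-1,\;1-\lambda_{\min}(B_{\rm ITG}^{-1}A)\big\},
\end{equation*}
using that $A^{-1}B_{\rm ITG}$ and $B_{\rm ITG}^{-1}A$ are mutually inverse, so their extreme eigenvalues are reciprocals. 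It therefore suffices to prove the two bounds $\lambda_{\max}(B_{\rm ITG}^{-1}A)\le\max\{1,\lambda_{\max}(B_{\rm c}^{-1}A_{\rm c})\}$ and $\lambda_{\min}(B_{\rm ITG}^{-1}A)\ge\min\{1,\lambda_{\min}(B_{\rm c}^{-1}A_{\rm c})\}/K_{\rm TG}$, after which taking the maximum reproduces \cref{Notay}.

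For the comparison, set $W:=(I-M^{-T}A)P$, so that by \cref{inv_tB_TG} and \cref{invB_TG} one has $B_{\rm ITG}^{-1}=\overline{M}^{-1}+WB_{\rm c}^{-1}W^{T}$ and $B_{\rm TG}^{-1}=\overline{M}^{-1}+WA_{\rm c}^{-1}W^{T}$, where $R:=WA_{\rm c}^{-1}W^{T}$ is SPSD. Writing $\mu_{\min}:=\lambda_{\min}(B_{\rm c}^{-1}A_{\rm c})$ and $\mu_{\max}:=\lambda_{\max}(B_{\rm c}^{-1}A_{\rm c})$, the generalized eigenvalue characterization gives $\mu_{\min}B_{\rm c}\preceq A_{\rm c}\preceq\mu_{\max}B_{\rm c}$, and inverting yields $\mu_{\min}A_{\rm c}^{-1}\preceq B_{\rm c}^{-1}\preceq\mu_{\max}A_{\rm c}^{-1}$. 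Conjugating by $W$ and adding $\overline{M}^{-1}$ then sandwiches the inexact preconditioner,
\begin{equation*}
\overline{M}^{-1}+\mu_{\min}R\preceq B_{\rm ITG}^{-1}\preceq\overline{M}^{-1}+\mu_{\max}R.
\end{equation*}

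To finish, I would exploit the two normalizations furnished by the exact-case identity, namely $\lambda_{\max}(B_{\rm TG}^{-1}A)=1$ and $\lambda_{\min}(B_{\rm TG}^{-1}A)=1/K_{\rm TG}$ from \cref{1/K}, together with monotonicity of $\lambda_{\max}(\,\cdot\,A)$ and $\lambda_{\min}(\,\cdot\,A)$ under the Loewner order (valid since $A$ is SPD). The key algebraic trick is the scaling estimate: for $\mu\ge1$, nonnegativity of $R$ gives $\overline{M}^{-1}+\mu R\preceq\mu(\overline{M}^{-1}+R)=\mu B_{\rm TG}^{-1}$, whereas for $\mu\le1$ one simply has $\overline{M}^{-1}+\mu R\preceq\overline{M}^{-1}+R=B_{\rm TG}^{-1}$; the analogous lower estimates hold with the inequalities reversed. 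Applying the upper sandwich with $\mu=\mu_{\max}$ gives $\lambda_{\max}(B_{\rm ITG}^{-1}A)\le\max\{1,\mu_{\max}\}$, and the lower sandwich with $\mu=\mu_{\min}$ gives $\lambda_{\min}(B_{\rm ITG}^{-1}A)\ge\min\{1,\mu_{\min}\}/K_{\rm TG}$, which are precisely the two bounds needed.

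I expect the main obstacle to be the bookkeeping of inequality directions: one must correctly transfer the spectral bounds on $B_{\rm c}^{-1}A_{\rm c}$ into the inverse ordering $\mu_{\min}A_{\rm c}^{-1}\preceq B_{\rm c}^{-1}\preceq\mu_{\max}A_{\rm c}^{-1}$, and then keep the case split $\mu\gtrless1$ consistent between the upper bound (driven by $\mu_{\max}$) and the lower bound (driven by $\mu_{\min}$). The only genuinely structural ingredient, beyond routine Loewner monotonicity, is that the \emph{same} SPSD remainder $R$ appears in both $B_{\rm ITG}^{-1}$ and $B_{\rm TG}^{-1}$, which is what lets the scaling trick collapse everything onto the known extreme eigenvalues of $B_{\rm TG}^{-1}A$; this is exactly the role played by the Schur-complement/minimization machinery in the original derivation.
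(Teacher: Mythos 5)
Your argument is correct, and it is a genuinely different (and more self-contained) route than the one the paper relies on: the paper states this bound without proof, attributing it to Notay's derivation via a hierarchical-basis change of variables and the minimization property of Schur complements, which works at the level of the preconditioners $B_{\rm TG}$, $B_{\rm ITG}$ themselves. You instead work entirely with the \emph{inverses}, exploiting that \cref{inv_tB_TG} and \cref{invB_TG} share the additive structure $\overline{M}^{-1}+W\,(\cdot)^{-1}W^{T}$ with the same $W=(I-M^{-T}A)P$, so that the spectral inclusion $\mu_{\min}A_{\rm c}^{-1}\preceq B_{\rm c}^{-1}\preceq\mu_{\max}A_{\rm c}^{-1}$ transfers by congruence to $\overline{M}^{-1}+\mu_{\min}R\preceq B_{\rm ITG}^{-1}\preceq\overline{M}^{-1}+\mu_{\max}R$, and the case split $\mu\gtrless 1$ collapses everything onto the known normalizations $\lambda_{\max}(B_{\rm TG}^{-1}A)=1$ and $\lambda_{\min}(B_{\rm TG}^{-1}A)=1/K_{\rm TG}$ from \cref{1/K}. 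All the individual steps check out: the reciprocal rewriting of \cref{norm_tE_TG}, the order reversal under inversion of SPD matrices, congruence monotonicity for the rectangular $W$, and the two scaling estimates (only note that for $\mu\ge 1$ the inequality $\overline{M}^{-1}+\mu R\preceq\mu(\overline{M}^{-1}+R)$ rests on $(\mu-1)\overline{M}^{-1}\succeq 0$, i.e.\ positivity of $\overline{M}^{-1}$, while nonnegativity of $R$ is what is needed in the $\mu\le 1$ case --- a cosmetic misattribution only). What your approach buys is the elimination of the Schur-complement machinery and of any explicit two-by-two block splitting; what it gives up is the finer information that Notay's framework also yields (e.g.\ intermediate variational characterizations of $\mathbf{v}^{T}B_{\rm TG}\mathbf{v}$), and it is worth observing that your Loewner comparison is also the reason the bound cannot be sharp when $\mu_{\max}\to\infty$, exactly the defect the paper's \cref{rek:Notay} points out.
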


\begin{remark}\label{rek:Notay}
If $B_{\rm c}$ in~\cref{alg:TG} is taken to be $\omega I_{n_{\rm c}}$ with $\omega>0$, then
\begin{displaymath}
E_{\rm ITG}=(I-M^{-T}A)\bigg(I-\frac{1}{\omega}PP^{T}A\bigg)(I-M^{-1}A).
\end{displaymath}
As $\omega\rightarrow+\infty$,~\cref{alg:TG} reduces to an algorithm containing only the presmoothing and postsmoothing steps, whose convergence factor is
\begin{align*}
\big\|(I-M^{-T}A)(I-M^{-1}A)\big\|_{A}&=\lambda_{\max}\big((I-M^{-T}A)(I-M^{-1}A)\big)\\
&=\lambda_{\max}\big((I-M^{-1}A)(I-M^{-T}A)\big)\\
&=1-\lambda_{\min}(\widetilde{M}^{-1}A).
\end{align*}
It is easy to see that the upper bound in~\cref{Notay} tends to $1$ (as $\omega\rightarrow+\infty$), from which one cannot determine whether the limiting algorithm is convergent.
\end{remark}

In the next section, we will establish a new convergence theory for~\cref{alg:TG}, which is based on the following Weyl's theorem; see, e.g.,~\cite[Theorem~4.3.1]{Horn2013}.

\begin{theorem}\label{thm:Weyl}
Let $H_{1}$ and $H_{2}$ be $n\times n$ Hermitian matrices. Assume that the spectra of $H_{1}$, $H_{2}$, and $H_{1}+H_{2}$ are $\{\lambda_{i}(H_{1})\}_{i=1}^{n}$, $\{\lambda_{i}(H_{2})\}_{i=1}^{n}$, and $\{\lambda_{i}(H_{1}+H_{2})\}_{i=1}^{n}$, respectively, where $\lambda_{i}(\cdot)$ denotes the $i$th smallest eigenvalue of a matrix. Then, for~each $k=1,\ldots,n$, it holds that
\begin{displaymath}
\lambda_{k-j+1}(H_{1})+\lambda_{j}(H_{2})\leq\lambda_{k}(H_{1}+H_{2})\leq\lambda_{k+\ell}(H_{1})+\lambda_{n-\ell}(H_{2})
\end{displaymath}
for all $j=1,\ldots,k$ and $\ell=0,\ldots,n-k$. In particular, one has
\begin{subequations}
\begin{align}
\lambda_{\min}(H_{1}+H_{2})&\geq\lambda_{\min}(H_{1})+\lambda_{\min}(H_{2}),\label{Weyl-min-low}\\
\lambda_{\min}(H_{1}+H_{2})&\leq\min\big\{\lambda_{\min}(H_{1})+\lambda_{\max}(H_{2}),\,\lambda_{\max}(H_{1})+\lambda_{\min}(H_{2})\big\},\label{Weyl-min-up}\\
\lambda_{\max}(H_{1}+H_{2})&\geq\max\big\{\lambda_{\max}(H_{1})+\lambda_{\min}(H_{2}),\,\lambda_{\min}(H_{1})+\lambda_{\max}(H_{2})\big\},\label{Weyl-max-low}\\
\lambda_{\max}(H_{1}+H_{2})&\leq\lambda_{\max}(H_{1})+\lambda_{\max}(H_{2}).\label{Weyl-max-up}
\end{align}
\end{subequations}
\end{theorem}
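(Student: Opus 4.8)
The plan is to reduce everything to the Courant--Fischer min--max characterization of eigenvalues, which converts statements about $\lambda_k$ into statements about subspaces, and then to exploit a single dimension-counting fact. Writing $R_H(\mathbf{x}) := \langle H\mathbf{x},\mathbf{x}\rangle/\langle\mathbf{x},\mathbf{x}\rangle$ for the Rayleigh quotient of a Hermitian matrix $H$ on $\mathbb{C}^{n}$, recall that
\[
\lambda_{k}(H)=\min_{\dim V=k}\ \max_{\mathbf{x}\in V\setminus\{0\}}R_{H}(\mathbf{x}),
\]
the outer minimum ranging over all $k$-dimensional subspaces $V\subseteq\mathbb{C}^{n}$. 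The one fact doing the real work is that any two subspaces satisfy $\dim(V_{1}\cap V_{2})\ge\dim V_{1}+\dim V_{2}-n$.

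First I would prove the upper bound. Fix $k$ and $\ell\in\{0,\ldots,n-k\}$. Let $V_{1}$ be spanned by eigenvectors of $H_{1}$ associated with its $k+\ell$ smallest eigenvalues, so $\dim V_{1}=k+\ell$ and $R_{H_{1}}(\mathbf{x})\le\lambda_{k+\ell}(H_{1})$ for all nonzero $\mathbf{x}\in V_{1}$; let $V_{2}$ be spanned by eigenvectors of $H_{2}$ associated with its $n-\ell$ smallest eigenvalues, so $\dim V_{2}=n-\ell$ and $R_{H_{2}}(\mathbf{x})\le\lambda_{n-\ell}(H_{2})$ on $V_{2}$. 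The dimension count gives $\dim(V_{1}\cap V_{2})\ge(k+\ell)+(n-\ell)-n=k$, so there is a $k$-dimensional subspace $W\subseteq V_{1}\cap V_{2}$. Since $R_{H_{1}+H_{2}}=R_{H_{1}}+R_{H_{2}}$ pointwise, every nonzero $\mathbf{x}\in W$ obeys $R_{H_{1}+H_{2}}(\mathbf{x})\le\lambda_{k+\ell}(H_{1})+\lambda_{n-\ell}(H_{2})$. Feeding this particular $W$ into the min--max formula for $\lambda_{k}(H_{1}+H_{2})$ yields the upper bound.

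Then I would obtain the lower bound for free by a substitution rather than a second dimension count. Applying the just-proved upper bound to the pair $H_{1}+H_{2}$ and $-H_{2}$ (whose sum is $H_{1}$), together with the elementary spectral reflection $\lambda_{i}(-H)=-\lambda_{n-i+1}(H)$, produces $\lambda_{p}(H_{1}+H_{2})\ge\lambda_{p-\ell'}(H_{1})+\lambda_{\ell'+1}(H_{2})$ over the admissible ranges of $p$ and $\ell'$; putting $\ell'=j-1$ and renaming $p\mapsto k$ recovers exactly $\lambda_{k-j+1}(H_{1})+\lambda_{j}(H_{2})\le\lambda_{k}(H_{1}+H_{2})$. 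Finally, the four special cases are read off by sending $k$ and the free index to their extremes: $k=1$ with $j=1$ gives \cref{Weyl-min-low}; $k=1$ with $\ell\in\{0,n-1\}$ gives \cref{Weyl-min-up}; $k=n$ with $j\in\{1,n\}$ gives \cref{Weyl-max-low}; and $k=n$ with $\ell=0$ gives \cref{Weyl-max-up}.

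The main obstacle is the intersection step: one must guarantee that the two eigenspaces, chosen to bound $R_{H_{1}}$ and $R_{H_{2}}$ from above, meet in a subspace of dimension at least $k$, and one must check that the stated ranges $j=1,\ldots,k$ and $\ell=0,\ldots,n-k$ are precisely those keeping all the eigenvalue indices and subspace dimensions inside $[1,n]$. Everything else, namely the additivity of Rayleigh quotients and the behavior of the spectrum under $H\mapsto-H$, is routine.
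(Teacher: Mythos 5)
Your argument is correct. Note that the paper does not prove this statement at all: it is quoted verbatim as a classical result with a pointer to Horn and Johnson, Theorem~4.3.1. Your proof is essentially the standard one found there --- the Courant--Fischer min--max characterization plus the dimension count $\dim(V_{1}\cap V_{2})\ge\dim V_{1}+\dim V_{2}-n$ applied to the two eigenspaces --- with the mild stylistic twist that you obtain the lower bound from the upper bound via the reflection $\lambda_{i}(-H)=-\lambda_{n-i+1}(H)$ rather than by a second, symmetric subspace argument; the index bookkeeping in that substitution ($p=k-j+1$, $\ell'=j-1$) and the four limiting cases all check out.
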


\section{Convergence of inexact two-grid methods} \label{sec:iTG}

In this section, we develop a theoretical framework for analyzing the convergence of~\cref{alg:TG}. The main idea is to measure the deviation $B_{\rm c}-A_{\rm c}$ by using the restricted smoother $P^{T}\widetilde{M}P$, which is inspired by the following expression.

\begin{lemma}\label{lem:iTG1}
The inexact two-grid preconditioner $B_{\rm ITG}$ can be expressed as
\begin{equation}\label{tB_TG}
B_{\rm ITG}=A+(I-AM^{-T})\widetilde{M}\big(I-P(P^{T}\widetilde{M}P+B_{\rm c}-A_{\rm c})^{-1}P^{T}\widetilde{M}\big)(I-M^{-1}A).
\end{equation}
\end{lemma}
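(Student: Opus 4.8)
The plan is to invert the explicit formula~\cref{inv_tB_TG} for $B_{\rm ITG}^{-1}$ by means of the Sherman--Morrison--Woodbury identity and then recognize the result as the right-hand side of~\cref{tB_TG}. Throughout, I would write $R:=I-M^{-1}A$ and $S:=I-M^{-T}A$, so that $R^{T}=I-AM^{-T}$ and $S^{T}=I-AM^{-1}$, and abbreviate $G:=M+M^{T}-A$ (SPD by assumption); then $\overline{M}=MG^{-1}M^{T}$ and $\widetilde{M}=M^{T}G^{-1}M$ by~\cref{barM,tildM}, and~\cref{inv_tB_TG} reads $B_{\rm ITG}^{-1}=\overline{M}^{-1}+SPB_{\rm c}^{-1}P^{T}S^{T}$.

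First I would apply the Woodbury identity with base matrix $\overline{M}^{-1}$, low-rank factor $SP$, and inner matrix $B_{\rm c}^{-1}$. Since $\overline{M}$ and $B_{\rm c}$ are SPD and the capacitance matrix is SPD (verified below), this yields
\begin{equation*}
B_{\rm ITG}=\overline{M}-\overline{M}SP\big(B_{\rm c}+P^{T}S^{T}\overline{M}SP\big)^{-1}P^{T}S^{T}\overline{M}.
\end{equation*}
It then remains to convert the leading term, the outer factors, and the capacitance into $A+R^{T}\widetilde{M}R$, $R^{T}\widetilde{M}P$, and $P^{T}\widetilde{M}P+B_{\rm c}-A_{\rm c}$, respectively.

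The key algebraic identity, and the step I expect to carry everything, is
\begin{equation*}
\overline{M}S=R^{T}\widetilde{M}=M-MG^{-1}M.
\end{equation*}
This follows in two lines from $M^{T}-A=G-M$: indeed $\overline{M}S=MG^{-1}(M^{T}-A)=MG^{-1}(G-M)=M-MG^{-1}M$, and likewise $R^{T}\widetilde{M}=(M^{T}-A)G^{-1}M=(G-M)G^{-1}M=M-MG^{-1}M$. Granting this, the remaining simplifications are immediate from~\cref{rela-barM,rela-tildM}. For the outer factors, $\overline{M}SP=R^{T}\widetilde{M}P$ and, by transposition, $P^{T}S^{T}\overline{M}=P^{T}\widetilde{M}R$. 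For the leading term, $R^{T}\widetilde{M}R=(\overline{M}S)R=\overline{M}(SR)=\overline{M}(I-\overline{M}^{-1}A)=\overline{M}-A$ by~\cref{rela-barM}, so that $\overline{M}=A+R^{T}\widetilde{M}R$. For the capacitance, $S^{T}\overline{M}S=S^{T}(R^{T}\widetilde{M})=(RS)^{T}\widetilde{M}=(I-A\widetilde{M}^{-1})\widetilde{M}=\widetilde{M}-A$ by~\cref{rela-tildM}, whence $B_{\rm c}+P^{T}S^{T}\overline{M}SP=B_{\rm c}+P^{T}(\widetilde{M}-A)P=P^{T}\widetilde{M}P+B_{\rm c}-A_{\rm c}$, using $A_{\rm c}=P^{T}AP$; this matrix is SPD because $\widetilde{M}-A$ is SPSD and $B_{\rm c}$ is SPD, which also legitimizes the Woodbury step.

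Substituting these three facts into the Woodbury formula gives $B_{\rm ITG}=(A+R^{T}\widetilde{M}R)-R^{T}\widetilde{M}P(P^{T}\widetilde{M}P+B_{\rm c}-A_{\rm c})^{-1}P^{T}\widetilde{M}R$, which is exactly~\cref{tB_TG} after factoring $R^{T}\widetilde{M}$ on the left and $R=I-M^{-1}A$ on the right. The only genuine obstacle is bookkeeping discipline: one must resist expanding $\overline{M}S$ and $R^{T}\widetilde{M}$ term by term, since that obscures the cancellation, and instead use $M^{T}-A=G-M$ to collapse each product to $M-MG^{-1}M$; once this single identity is in hand, every other reduction is a one-line consequence of the already-established relations~\cref{rela-barM,rela-tildM}.
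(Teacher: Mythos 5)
Your proof is correct and follows essentially the same route as the paper: apply the Sherman--Morrison--Woodbury formula to~\cref{inv_tB_TG}, use the identity $\overline{M}(I-M^{-T}A)=(I-AM^{-T})\widetilde{M}$ to rewrite the outer factors, identify the capacitance matrix via $(I-AM^{-1})\overline{M}(I-M^{-T}A)=\widetilde{M}-A$, and finish with $\overline{M}=A+(I-AM^{-T})\widetilde{M}(I-M^{-1}A)$. The only difference is that you verify these auxiliary identities explicitly (via $M^{T}-A=G-M$), which the paper states without proof.
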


\begin{proof}
In view of~\cref{barM} and~\cref{tildM}, we have
\begin{displaymath}
\overline{M}(I-M^{-T}A)=(I-AM^{-T})\widetilde{M} \quad \text{and} \quad (I-AM^{-1})\overline{M}(I-M^{-T}A)=\widetilde{M}-A.
\end{displaymath}
Using~\cref{inv_tB_TG} and the Sherman--Morrison--Woodbury formula~\cite{Sherman1950,Woodbury1950,XXF2017}, we obtain
\begin{equation}\label{tB_TG1}
B_{\rm ITG}=\overline{M}-(I-AM^{-T})\widetilde{M}P(P^{T}\widetilde{M}P+B_{\rm c}-A_{\rm c})^{-1}P^{T}\widetilde{M}(I-M^{-1}A).
\end{equation}
Note that
\begin{equation}\label{sym-rela}
\overline{M}=A+(I-AM^{-T})\widetilde{M}(I-M^{-1}A).
\end{equation}
The expression~\cref{tB_TG} follows immediately by combining~\cref{tB_TG1} and~\cref{sym-rela}.
\end{proof}

\begin{remark}
If $B_{\rm c}=A_{\rm c}$, we get from~\cref{tB_TG} that
\begin{equation}\label{B_TG}
B_{\rm TG}=A+(I-AM^{-T})\widetilde{M}(I-\varPi_{\widetilde{M}})(I-M^{-1}A),
\end{equation}
from which one can readily see that $B_{\rm TG}-A$ is SPSD.
\end{remark}

The following lemma gives some technical eigenvalue identities used in the subsequent analysis.

\begin{lemma}\label{lem:iTG2}
The extreme eigenvalues of $(A^{-1}\widetilde{M}-I)(I-\varPi_{\widetilde{M}})$ and $(A^{-1}\widetilde{M}-I)\varPi_{\widetilde{M}}$ have the following properties:
\begin{subequations}
\begin{align}
&\lambda_{\min}\big((A^{-1}\widetilde{M}-I)(I-\varPi_{\widetilde{M}})\big)=0,\label{eig1.1}\\
&\lambda_{\max}\big((A^{-1}\widetilde{M}-I)(I-\varPi_{\widetilde{M}})\big)=K_{\rm TG}-1,\label{eig1.2}\\
&\lambda_{\min}\big((A^{-1}\widetilde{M}-I)\varPi_{\widetilde{M}}\big)=0,\label{eig1.3}\\
&\lambda_{\max}\big((A^{-1}\widetilde{M}-I)\varPi_{\widetilde{M}}\big)=\lambda_{\max}(A^{-1}\widetilde{M}\varPi_{\widetilde{M}})-1.\label{eig1.4}
\end{align}
\end{subequations}
\end{lemma}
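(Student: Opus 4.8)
The plan is to strip away all the asymmetry at once by a single similarity transformation with $\widetilde{M}^{1/2}$. Set $G:=\widetilde{M}^{1/2}A^{-1}\widetilde{M}^{1/2}-I$ and $R:=\widetilde{M}^{1/2}(I-\varPi_{\widetilde{M}})\widetilde{M}^{-1/2}$, with complement $R':=I-R=\widetilde{M}^{1/2}\varPi_{\widetilde{M}}\widetilde{M}^{-1/2}$. Since $\widetilde{M}-A$ is SPSD, we have $A^{-1}\succeq\widetilde{M}^{-1}$, so $G=\widetilde{M}^{1/2}(A^{-1}-\widetilde{M}^{-1})\widetilde{M}^{1/2}$ is symmetric positive semidefinite. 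Because $\varPi_{\widetilde{M}}$ is an $\widetilde{M}$-orthogonal projection, $R$ and $R'$ are genuine orthogonal projections (symmetric and idempotent in the Euclidean inner product). Conjugating the four matrices under study by $\widetilde{M}^{1/2}$ then gives $(A^{-1}\widetilde{M}-I)(I-\varPi_{\widetilde{M}})\sim GR$, $(A^{-1}\widetilde{M}-I)\varPi_{\widetilde{M}}\sim GR'$, and the two auxiliary products $A^{-1}\widetilde{M}(I-\varPi_{\widetilde{M}})\sim(G+I)R$ and $A^{-1}\widetilde{M}\varPi_{\widetilde{M}}\sim(G+I)R'$; since similar matrices share spectra, it suffices to analyze $GR$ and $GR'$.

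For the minimum eigenvalues I would use $\lambda(XY)\setminus\{0\}=\lambda(YX)\setminus\{0\}$ together with $R^{2}=R$ to conclude that the nonzero eigenvalues of $GR$ coincide with those of $RGR=R^{T}GR$, which is symmetric positive semidefinite. Hence every eigenvalue of $GR$ is real and nonnegative, and since $R$ has a nontrivial kernel (as $n_{\rm c}<n$), $0$ is an eigenvalue of $GR$. This gives $\lambda_{\min}(GR)=0$, i.e.\ \cref{eig1.1}, and the same argument applied to $GR'$ (whose kernel is nontrivial because $n_{\rm c}\geq1$) yields \cref{eig1.3}.

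For the maximum eigenvalues I would first re-express $K_{\rm TG}$. Using $\|(I-\varPi_{\widetilde{M}})\mathbf{v}\|_{\widetilde{M}}^{2}=\langle\widetilde{M}(I-\varPi_{\widetilde{M}})\mathbf{v},\mathbf{v}\rangle$ (the projection identity $(I-\varPi_{\widetilde{M}})^{T}\widetilde{M}(I-\varPi_{\widetilde{M}})=\widetilde{M}(I-\varPi_{\widetilde{M}})$) and the substitution $\mathbf{w}=\widetilde{M}^{1/2}\mathbf{v}$, the Rayleigh quotient in \cref{K_TG} becomes $\langle R\mathbf{w},\mathbf{w}\rangle/\langle(G+I)^{-1}\mathbf{w},\mathbf{w}\rangle$, whence $K_{\rm TG}=\lambda_{\max}\big((G+I)R\big)$. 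The decisive step is the shift: restricting to $\Range(R)$, on which $R$ acts as the identity, gives $\lambda_{\max}\big((G+I)R\big)=\lambda_{\max}\big(R(G+I)R\big)=\lambda_{\max}(RGR)+1$, while $\lambda_{\max}(GR)=\lambda_{\max}(RGR)$ from the first step. Combining the two yields $\lambda_{\max}(GR)=K_{\rm TG}-1$, which is \cref{eig1.2}. Repeating verbatim with $R'$ gives $\lambda_{\max}(A^{-1}\widetilde{M}\varPi_{\widetilde{M}})=\lambda_{\max}\big((G+I)R'\big)=\lambda_{\max}(R'GR')+1$ together with $\lambda_{\max}(GR')=\lambda_{\max}(R'GR')$, hence \cref{eig1.4}.

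The main obstacle is precisely the $+1$ shift: because $R$ is not the identity on all of $\mathbb{R}^{n}$, one cannot simply write $\lambda_{\max}\big((G+I)R\big)=\lambda_{\max}(GR)+1$, and the gap is closed only by passing to the invariant subspace $\Range(R)$ and checking that it is nontrivial (guaranteed by $0<n_{\rm c}<n$). A secondary point worth stating explicitly is that $GR$ and $GR'$ are nonsymmetric, so one must justify that their extreme eigenvalues are real before speaking of $\lambda_{\min}$ and $\lambda_{\max}$; the reduction to the symmetric matrices $RGR$ and $R'GR'$ settles exactly this. Everything else—verifying that $R$ is an orthogonal projection and that $G\succeq0$—is routine.
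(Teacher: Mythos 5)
Your proof is correct, and it reaches the four identities by a route that genuinely differs from the paper's in two respects. First, where you symmetrize everything at once by conjugating with $\widetilde{M}^{1/2}$ — so that $\varPi_{\widetilde{M}}$ becomes a Euclidean orthogonal projection and the problem reduces to products $GR$, $GR'$ of an SPSD matrix with orthogonal projections, handled via $\lambda(XY)\setminus\{0\}=\lambda(YX)\setminus\{0\}$ and the symmetric compressions $RGR$, $R'GR'$ — the paper instead conjugates with $(A^{-1}-\widetilde{M}^{-1})^{\frac{1}{2}}$ to get nonnegativity of the spectra and then block-diagonalizes the projection ($X^{-1}\varPi_{\widetilde{M}}X=\diag(I_{n_{\rm c}},0)$), reading off \cref{eig1.1}, \cref{eig1.3}\creflastconjunction\cref{eig1.4} from the resulting block-triangular forms; these two mechanisms are interchangeable here. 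Second, and more substantively, your derivation of \cref{eig1.2} works directly from the variational definition \cref{K_TG}, establishing $K_{\rm TG}=\lambda_{\max}\big((G+I)R\big)$ and then peeling off the $+1$ on the invariant subspace $\Range(R)$; the paper instead invokes the exact two-grid identity via \cref{1/K} together with the preconditioner expression \cref{B_TG} from \cref{lem:iTG1} and a cyclic permutation. Your route is more self-contained — it does not lean on the identity \cref{XZ} or on \cref{lem:iTG1} at all, and in effect re-proves $K_{\rm TG}=\lambda_{\max}\big(A^{-1}\widetilde{M}(I-\varPi_{\widetilde{M}})\big)$ from scratch — at the cost of redoing a Rayleigh-quotient computation; the paper gets \cref{eig1.2} almost for free from machinery it needs anyway. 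One cosmetic slip: the dimension counts in your parentheticals are swapped. Since $I-\varPi_{\widetilde{M}}$ has rank $n-n_{\rm c}$, the kernel of $R$ is nontrivial because $n_{\rm c}\geq 1$ (its dimension is $n_{\rm c}$), while the kernel of $R'$ is nontrivial because $n_{\rm c}<n$ (dimension $n-n_{\rm c}$); as $0<n_{\rm c}<n$ by assumption, both hold and nothing breaks.
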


\begin{proof}
Since $A^{-1}-\widetilde{M}^{-1}$ is SPSD and $\varPi_{\widetilde{M}}$ is an $\widetilde{M}$-orthogonal projection, it holds that
\begin{displaymath}
\lambda\big((A^{-1}\widetilde{M}-I)(I-\varPi_{\widetilde{M}})\big)=\lambda\big((A^{-1}-\widetilde{M}^{-1})^{\frac{1}{2}}\widetilde{M}(I-\varPi_{\widetilde{M}})(A^{-1}-\widetilde{M}^{-1})^{\frac{1}{2}}\big)\subset[0,+\infty).
\end{displaymath}
Similarly, one has
\begin{displaymath}
\lambda\big((A^{-1}\widetilde{M}-I)\varPi_{\widetilde{M}}\big)\subset[0,+\infty) \quad \text{and} \quad \lambda(A^{-1}\widetilde{M}\varPi_{\widetilde{M}})\subset[0,+\infty).
\end{displaymath}

Due to the fact that $\varPi_{\widetilde{M}}$ is a projection matrix of rank $n_{\rm c}$, there exists a nonsingular matrix $X\in\mathbb{R}^{n\times n}$ such that
\begin{displaymath}
X^{-1}\varPi_{\widetilde{M}}X=\begin{pmatrix}
I_{n_{\rm c}} & 0 \\
0 & 0
\end{pmatrix}.
\end{displaymath}
Let
\begin{displaymath}
X^{-1}A^{-1}\widetilde{M}X=\begin{pmatrix}
\widehat{X}_{11} & \widehat{X}_{12} \\
\widehat{X}_{21} & \widehat{X}_{22}
\end{pmatrix},
\end{displaymath}
where $\widehat{X}_{ij}\in\mathbb{R}^{m_{i}\times m_{j}}$ with $m_{1}=n_{\rm c}$ and $m_{2}=n-n_{\rm c}$. Then
\begin{align*}
&X^{-1}(A^{-1}\widetilde{M}-I)(I-\varPi_{\widetilde{M}})X=\begin{pmatrix}
0 & \widehat{X}_{12} \\
0 & \widehat{X}_{22}-I_{n-n_{\rm c}}
\end{pmatrix},\\
&X^{-1}(A^{-1}\widetilde{M}-I)\varPi_{\widetilde{M}}X=\begin{pmatrix}
\widehat{X}_{11}-I_{n_{\rm c}} & 0 \\
\widehat{X}_{21} & 0
\end{pmatrix},\\
&X^{-1}A^{-1}\widetilde{M}\varPi_{\widetilde{M}}X=\begin{pmatrix}
\widehat{X}_{11} & 0 \\
\widehat{X}_{21} & 0
\end{pmatrix},
\end{align*}
from which one can obtain the identities~\cref{eig1.1}, \cref{eig1.3}, and~\cref{eig1.4}.

From~\cref{1/K}, we have
\begin{displaymath}
K_{\rm TG}=\lambda_{\max}(A^{-1}B_{\rm TG}),
\end{displaymath}
which, together with~\cref{rela-tildM} and~\cref{B_TG}, yields
\begin{align*}
K_{\rm TG}&=1+\lambda_{\max}\big(A^{-1}(I-AM^{-T})\widetilde{M}(I-\varPi_{\widetilde{M}})(I-M^{-1}A)\big)\\
&=1+\lambda_{\max}\big((I-M^{-1}A)(I-M^{-T}A)A^{-1}\widetilde{M}(I-\varPi_{\widetilde{M}})\big)\\
&=1+\lambda_{\max}\big((A^{-1}\widetilde{M}-I)(I-\varPi_{\widetilde{M}})\big),
\end{align*}
which leads to the identity~\cref{eig1.2}. This completes the proof.
\end{proof}

Define
\begin{subequations}
\begin{align}
d_{1}&:=\frac{1}{1+\lambda_{\max}\big((P^{T}\widetilde{M}P)^{-1}(B_{\rm c}-A_{\rm c})\big)},\label{d1}\\
d_{2}&:=\frac{1}{1+\lambda_{\min}\big((P^{T}\widetilde{M}P)^{-1}(B_{\rm c}-A_{\rm c})\big)}.\label{d2}
\end{align}
\end{subequations}
Recall that $B_{\rm c}$ is SPD and $P^{T}\widetilde{M}P-A_{\rm c}$ is SPSD. We then have
\begin{align*}
&0<d_{1}<\frac{1}{1-\lambda_{\min}\big((P^{T}\widetilde{M}P)^{-1}A_{\rm c}\big)}=\frac{\lambda_{\max}(A_{\rm c}^{-1}P^{T}\widetilde{M}P)}{\lambda_{\max}(A_{\rm c}^{-1}P^{T}\widetilde{M}P)-1},\\ &0<d_{2}<\frac{1}{1-\lambda_{\max}\big((P^{T}\widetilde{M}P)^{-1}A_{\rm c}\big)}=\frac{\lambda_{\min}(A_{\rm c}^{-1}P^{T}\widetilde{M}P)}{\lambda_{\min}(A_{\rm c}^{-1}P^{T}\widetilde{M}P)-1}.
\end{align*}

We are now in a position to present a new convergence theory for~\cref{alg:TG}.

\begin{theorem}\label{thm:iTG}
Let $d_{1}$ and $d_{2}$ be defined by~\cref{d1} and~\cref{d2}, respectively. Under the assumptions of~\cref{alg:TG}, $\|E_{\rm ITG}\|_{A}$ satisfies the following estimates.

{\rm (i)} If $d_{2}\leq 1$, then
\begin{equation}\label{iTGest1.1}
\mathscr{L}_{1}\leq\|E_{\rm ITG}\|_{A}\leq\mathscr{U}_{1},
\end{equation}
where
\begin{align*}
\mathscr{L}_{1}&=1-\frac{1}{\max\big\{K_{\rm TG},\,\lambda_{\max}(A^{-1}\widetilde{M})-d_{2}\lambda_{\max}(A^{-1}\widetilde{M}\varPi_{\widetilde{M}})+d_{2}\big\}},\\
\mathscr{U}_{1}&=1-\frac{1}{d_{1}K_{\rm TG}+(1-d_{1})\lambda_{\max}(A^{-1}\widetilde{M})}.
\end{align*}

{\rm (ii)} If $d_{1}\leq 1<d_{2}<\frac{\lambda_{\max}\left(A^{-1}\widetilde{M}\varPi_{\widetilde{M}}\right)}{\lambda_{\max}\left(A^{-1}\widetilde{M}\varPi_{\widetilde{M}}\right)-1}$, then
\begin{equation}\label{iTGest1.2}
\mathscr{L}_{2}\leq\|E_{\rm ITG}\|_{A}\leq\max\big\{\mathscr{U}_{1},\,\mathscr{U}_{2}\big\},
\end{equation}
where
\begin{align*}
\mathscr{L}_{2}&=1-\frac{1}{\max\big\{\lambda_{\min}(A^{-1}\widetilde{M}),\,d_{2}K_{\rm TG}+(1-d_{2})\lambda_{\max}(A^{-1}\widetilde{M})\big\}},\\
\mathscr{U}_{2}&=\frac{1}{(1-d_{2})\lambda_{\max}(A^{-1}\widetilde{M}\varPi_{\widetilde{M}})+d_{2}}-1.
\end{align*}

{\rm (iii)} If $1<d_{1}\leq d_{2}<\frac{\lambda_{\max}\left(A^{-1}\widetilde{M}\varPi_{\widetilde{M}}\right)}{\lambda_{\max}\left(A^{-1}\widetilde{M}\varPi_{\widetilde{M}}\right)-1}$, then
\begin{equation}\label{iTGest1.3}
\max\big\{\mathscr{L}_{2},\,\mathscr{L}_{3}\big\}\leq\|E_{\rm ITG}\|_{A}\leq\mathscr{U}_{3},
\end{equation}
where
\begin{align*}
\mathscr{L}_{3}&=\frac{1}{\min\big\{\lambda_{\max}(A^{-1}\widetilde{M})-d_{1}\lambda_{\max}\big(A^{-1}\widetilde{M}\varPi_{\widetilde{M}}\big),\,(1-d_{1})\lambda_{\min}(A^{-1}\widetilde{M})\big\}+d_{1}}-1,\\
\mathscr{U}_{3}&=\max\Bigg\{1-\frac{1}{K_{\rm TG}},\,\frac{1}{(1-d_{2})\lambda_{\max}(A^{-1}\widetilde{M}\varPi_{\widetilde{M}})+d_{2}}-1\Bigg\}.
\end{align*}
\end{theorem}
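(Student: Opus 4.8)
The plan is to reduce the computation of $\|E_{\rm ITG}\|_A$ to a symmetric eigenvalue problem and then to sandwich the relevant matrix between two matrices whose spectra are governed by the quantities appearing in \cref{eig1.2} and \cref{eig1.4}. By \cref{norm_tE_TG} it suffices to produce two-sided bounds for $\lambda_{\min}(A^{-1}B_{\rm ITG})$ and $\lambda_{\max}(A^{-1}B_{\rm ITG})$. Starting from the expression \cref{tB_TG} in \cref{lem:iTG1}, cycling factors inside the spectrum and using the identity $(I-M^{-1}A)A^{-1}(I-AM^{-T})=A^{-1}-\widetilde{M}^{-1}$ implied by \cref{rela-tildM}, I would first establish
\begin{equation*}
\lambda\big(A^{-1}B_{\rm ITG}\big)=1+\lambda(\Theta),\qquad \Theta:=F\widetilde{M}(I-Q)F,
\end{equation*}
where $F:=(A^{-1}-\widetilde{M}^{-1})^{\frac{1}{2}}$ and $Q:=P(P^{T}\widetilde{M}P+B_{\rm c}-A_{\rm c})^{-1}P^{T}\widetilde{M}$. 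The key point is that $\widetilde{M}Q$ is symmetric, so $\widetilde{M}(I-Q)$ is symmetric and hence $\Theta$ is a genuine symmetric matrix; writing $\mu_{\min}=1+\lambda_{\min}(\Theta)$ and $\mu_{\max}=1+\lambda_{\max}(\Theta)$, the convergence factor becomes $\|E_{\rm ITG}\|_A=\max\{\mu_{\min}^{-1}-1,\,1-\mu_{\max}^{-1}\}$.

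The conceptual heart of the argument is a sandwich for $Q$. I would observe that $Q$ is self-adjoint with respect to $\langle\cdot,\cdot\rangle_{\widetilde{M}}$, that $\varPi_{\widetilde{M}}Q=Q=Q\varPi_{\widetilde{M}}$, and that $Q$ acts on $\Range(P)$ as $(P^{T}\widetilde{M}P+B_{\rm c}-A_{\rm c})^{-1}(P^{T}\widetilde{M}P)$, whose eigenvalues lie in $[d_{1},d_{2}]$ by \cref{d1}--\cref{d2}. Since $Q$ and $\varPi_{\widetilde{M}}$ commute and are simultaneously $\widetilde{M}$-orthogonally diagonalizable, this yields $d_{1}\varPi_{\widetilde{M}}\preceq_{\widetilde{M}}Q\preceq_{\widetilde{M}}d_{2}\varPi_{\widetilde{M}}$ in the $\widetilde{M}$-order, equivalently $d_{1}F\widetilde{M}\varPi_{\widetilde{M}}F\preceq F\widetilde{M}QF\preceq d_{2}F\widetilde{M}\varPi_{\widetilde{M}}F$ in the usual order (congruence by the symmetric $F$). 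Consequently $\Phi_{d_{2}}\preceq\Theta\preceq\Phi_{d_{1}}$, where $\Phi_{d}:=F\widetilde{M}F-dF\widetilde{M}\varPi_{\widetilde{M}}F=(1-d)F\widetilde{M}F+dG_{1}=G_{1}+(1-d)G_{2}$, with $G_{1}:=F\widetilde{M}(I-\varPi_{\widetilde{M}})F$ and $G_{2}:=F\widetilde{M}\varPi_{\widetilde{M}}F$. By \cref{lem:iTG2} and the congruence $\lambda(F\widetilde{M}F)=\lambda(A^{-1}\widetilde{M})-1$, the matrices $G_{1}$, $G_{2}$, and $F\widetilde{M}F$ have exactly the largest eigenvalues $K_{\rm TG}-1$, $\lambda_{\max}(A^{-1}\widetilde{M}\varPi_{\widetilde{M}})-1$, $\lambda_{\max}(A^{-1}\widetilde{M})-1$ and smallest eigenvalues $0$, $0$, $\lambda_{\min}(A^{-1}\widetilde{M})-1$ that are needed below.

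From here I would extract two-sided bounds on $\lambda_{\min}(\Theta)$ and $\lambda_{\max}(\Theta)$ from the monotonicity $\lambda_{k}(\Phi_{d_{2}})\le\lambda_{k}(\Theta)\le\lambda_{k}(\Phi_{d_{1}})$ and from Weyl's theorem (\cref{thm:Weyl}) applied to the two splittings of $\Phi_{d_{1}},\Phi_{d_{2}}$; which representation, $(1-d)F\widetilde{M}F+dG_{1}$ or $G_{1}+(1-d)G_{2}$, and which inequality among \cref{Weyl-min-low}--\cref{Weyl-max-up} are used depends on the sign of $1-d$. This is precisely what produces the three regimes: when $d_{2}\le1$ one has in addition the clean comparison $\Theta\succeq G_{1}$ (so $\mu_{\max}\ge K_{\rm TG}$), when $d_{1}>1$ one instead gets $\Theta\preceq G_{1}$ (so $\mu_{\max}\le K_{\rm TG}$), and the mixed case $d_{1}\le1<d_{2}$ combines the two. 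Feeding the resulting bounds on $\mu_{\min},\mu_{\max}$ into $\|E_{\rm ITG}\|_A=\max\{\mu_{\min}^{-1}-1,\,1-\mu_{\max}^{-1}\}$ and using monotonicity of $t\mapsto t^{-1}-1$ and $t\mapsto1-t^{-1}$ yields $\mathscr{L}_{1},\mathscr{U}_{1},\ldots,\mathscr{U}_{3}$; the constraint $d_{2}<\lambda_{\max}(A^{-1}\widetilde{M}\varPi_{\widetilde{M}})/(\lambda_{\max}(A^{-1}\widetilde{M}\varPi_{\widetilde{M}})-1)$ in (ii)--(iii) is exactly what keeps the lower bound $(1-d_{2})\lambda_{\max}(A^{-1}\widetilde{M}\varPi_{\widetilde{M}})+d_{2}$ on $\mu_{\min}$ positive, so that $\mu_{\min}^{-1}-1$ stays finite.

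I expect the main obstacle to be the bookkeeping in this last step: each of the four extreme-eigenvalue bounds must be paired with the correct Weyl inequality and the correct representation of $\Phi_{d}$, tracking how the sign of $1-d_{j}$ interchanges $\lambda_{\min}\leftrightarrow\lambda_{\max}$ on the scaled term, after which the two candidate bounds have to be combined as maxima or minima to recover the stated $\mathscr{L}_{j},\mathscr{U}_{j}$. The sandwich step itself, though conceptually central, is robust once the $\widetilde{M}$-self-adjointness and the commutativity of $Q$ and $\varPi_{\widetilde{M}}$ are in hand.
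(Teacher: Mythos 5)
Your proposal follows essentially the same route as the paper's proof: reduce to the extreme eigenvalues of $A^{-1}B_{\rm ITG}$ via \cref{norm_tE_TG} and the expression \cref{tB_TG}, symmetrize by congruence with $(A^{-1}-\widetilde{M}^{-1})^{\frac{1}{2}}$, sandwich $P(P^{T}\widetilde{M}P+B_{\rm c}-A_{\rm c})^{-1}P^{T}\widetilde{M}$ between $d_{1}\varPi_{\widetilde{M}}$ and $d_{2}\varPi_{\widetilde{M}}$, and then apply Weyl's theorem to the two splittings of $I-d\varPi_{\widetilde{M}}$ together with the identities of \cref{lem:iTG2}. The remaining work you defer (pairing each extreme eigenvalue with the right Weyl inequality in each sign regime of $1-d_{j}$) is exactly the case-by-case bookkeeping the paper carries out, so the plan is sound and matches the published argument.
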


\begin{proof}
By~\cref{tB_TG}, we have
\begin{displaymath}
A^{-1}B_{\rm ITG}=I+(I-M^{-T}A)A^{-1}\widetilde{M}\big(I-P(P^{T}\widetilde{M}P+B_{\rm c}-A_{\rm c})^{-1}P^{T}\widetilde{M}\big)(I-M^{-1}A).
\end{displaymath}
Then
\begin{align*}
\lambda(A^{-1}B_{\rm ITG})&=\lambda\big(I+(I-\widetilde{M}^{-1}A)A^{-1}\widetilde{M}\big(I-P(P^{T}\widetilde{M}P+B_{\rm c}-A_{\rm c})^{-1}P^{T}\widetilde{M}\big)\big)\\
&=\lambda\big(I+(A^{-1}\widetilde{M}-I)\big(I-P(P^{T}\widetilde{M}P+B_{\rm c}-A_{\rm c})^{-1}P^{T}\widetilde{M}\big)\big),
\end{align*}
which leads to
\begin{align*}
\lambda_{\min}(A^{-1}B_{\rm ITG})&=1+\lambda_{\min}\big((A^{-1}\widetilde{M}-I)\big(I-P(P^{T}\widetilde{M}P+B_{\rm c}-A_{\rm c})^{-1}P^{T}\widetilde{M}\big)\big),\\
\lambda_{\max}(A^{-1}B_{\rm ITG})&=1+\lambda_{\max}\big((A^{-1}\widetilde{M}-I)\big(I-P(P^{T}\widetilde{M}P+B_{\rm c}-A_{\rm c})^{-1}P^{T}\widetilde{M}\big)\big).
\end{align*}
Observe that $(A^{-1}\widetilde{M}-I)\big(I-P(P^{T}\widetilde{M}P+B_{\rm c}-A_{\rm c})^{-1}P^{T}\widetilde{M}\big)$ has the same eigenvalues as the symmetric matrix
\begin{displaymath}
(A^{-1}-\widetilde{M}^{-1})^{\frac{1}{2}}\widetilde{M}\big(I-P(P^{T}\widetilde{M}P+B_{\rm c}-A_{\rm c})^{-1}P^{T}\widetilde{M}\big)(A^{-1}-\widetilde{M}^{-1})^{\frac{1}{2}}.
\end{displaymath}
Since $B_{\rm c}-A_{\rm c}-\big(\frac{1}{d_{2}}-1\big)P^{T}\widetilde{M}P$ and $\big(\frac{1}{d_{1}}-1\big)P^{T}\widetilde{M}P-(B_{\rm c}-A_{\rm c})$ are SPSD, it follows that
\begin{subequations}
\begin{align}
&1+s_{2}\leq\lambda_{\min}(A^{-1}B_{\rm ITG})\leq 1+s_{1},\label{min-low-up}\\
&1+t_{2}\leq\lambda_{\max}(A^{-1}B_{\rm ITG})\leq 1+t_{1},\label{max-low-up}
\end{align}
\end{subequations}
where
\begin{align*}
s_{k}&=\lambda_{\min}\big((A^{-1}\widetilde{M}-I)(I-d_{k}\varPi_{\widetilde{M}})\big),\\
t_{k}&=\lambda_{\max}\big((A^{-1}\widetilde{M}-I)(I-d_{k}\varPi_{\widetilde{M}})\big).
\end{align*}

Next, we determine the upper bounds for $s_{1}$ and $t_{1}$, as well as the lower bounds for $s_{2}$ and $t_{2}$. The remainder of this proof is divided into three parts, which correspond to three cases stated in this theorem.

\textit{Case}~1: $d_{2}\leq 1$. By~\cref{Weyl-min-up}, we have
\begin{align*}
s_{1}&=\lambda_{\min}\big((A^{-1}\widetilde{M}-I)(I-d_{1}\varPi_{\widetilde{M}})\big)\\
&=\lambda_{\min}\big((A^{-1}-\widetilde{M}^{-1})^{\frac{1}{2}}\widetilde{M}(I-d_{1}\varPi_{\widetilde{M}})(A^{-1}-\widetilde{M}^{-1})^{\frac{1}{2}}\big)\\
&=\lambda_{\min}\big((A^{-1}-\widetilde{M}^{-1})^{\frac{1}{2}}\widetilde{M}\big(I-\varPi_{\widetilde{M}}+(1-d_{1})\varPi_{\widetilde{M}}\big)(A^{-1}-\widetilde{M}^{-1})^{\frac{1}{2}}\big)\\
&\leq\lambda_{\min}\big((A^{-1}-\widetilde{M}^{-1})^{\frac{1}{2}}\widetilde{M}(I-\varPi_{\widetilde{M}})(A^{-1}-\widetilde{M}^{-1})^{\frac{1}{2}}\big)\\
&\quad+(1-d_{1})\lambda_{\max}\big((A^{-1}-\widetilde{M}^{-1})^{\frac{1}{2}}\widetilde{M}\varPi_{\widetilde{M}}(A^{-1}-\widetilde{M}^{-1})^{\frac{1}{2}}\big)\\
&=\lambda_{\min}\big((A^{-1}\widetilde{M}-I)(I-\varPi_{\widetilde{M}})\big)+(1-d_{1})\lambda_{\max}\big((A^{-1}\widetilde{M}-I)\varPi_{\widetilde{M}}\big)\\
&=(1-d_{1})\big(\lambda_{\max}(A^{-1}\widetilde{M}\varPi_{\widetilde{M}})-1\big),
\end{align*}
where we have used the identities~\cref{eig1.1} and~\cref{eig1.4}. This suggests that Weyl's theorem (\cref{thm:Weyl}) can also be \textit{applied} to the nonsymmetric matrix $(A^{-1}\widetilde{M}-I)(I-d_{k}\varPi_{\widetilde{M}})$. Similarly,
\begin{align*}
s_{1}&=\lambda_{\min}\big((A^{-1}\widetilde{M}-I)(I-d_{1}\varPi_{\widetilde{M}})\big)\\
&\leq\lambda_{\min}(A^{-1}\widetilde{M}-I)-d_{1}\lambda_{\min}\big((A^{-1}\widetilde{M}-I)\varPi_{\widetilde{M}}\big)\\
&=\lambda_{\min}(A^{-1}\widetilde{M})-1,
\end{align*}
where we have used the fact~\cref{eig1.3}. Hence,
\begin{equation}\label{s1-up1}
s_{1}\leq\min\big\{(1-d_{1})\lambda_{\max}(A^{-1}\widetilde{M}\varPi_{\widetilde{M}})+d_{1},\,\lambda_{\min}(A^{-1}\widetilde{M})\big\}-1.
\end{equation}
Using~\cref{Weyl-min-low}, we obtain
\begin{align*}
s_{2}&=\lambda_{\min}\big((A^{-1}\widetilde{M}-I)\big((1-d_{2})I+d_{2}(I-\varPi_{\widetilde{M}})\big)\big)\\
&\geq(1-d_{2})\lambda_{\min}(A^{-1}\widetilde{M}-I)+d_{2}\lambda_{\min}\big((A^{-1}\widetilde{M}-I)(I-\varPi_{\widetilde{M}})\big),
\end{align*}
which, together with~\cref{eig1.1}, yields
\begin{equation}\label{s2-low1}
s_{2}\geq(1-d_{2})\lambda_{\min}(A^{-1}\widetilde{M})+d_{2}-1.
\end{equation}

By~\cref{Weyl-max-up}, we have
\begin{align*}
t_{1}&=\lambda_{\max}\big((A^{-1}\widetilde{M}-I)\big((1-d_{1})I+d_{1}(I-\varPi_{\widetilde{M}})\big)\big)\\
&\leq(1-d_{1})\lambda_{\max}(A^{-1}\widetilde{M}-I)+d_{1}\lambda_{\max}\big((A^{-1}\widetilde{M}-I)(I-\varPi_{\widetilde{M}})\big).
\end{align*}
The above inequality, combined with~\cref{eig1.2}, leads to
\begin{equation}\label{t1-up1}
t_{1}\leq d_{1}K_{\rm TG}+(1-d_{1})\lambda_{\max}(A^{-1}\widetilde{M})-1.
\end{equation}
Applying~\cref{Weyl-max-low} and~\cref{eig1.2}--\cref{eig1.4}, we get that
\begin{align*}
t_{2}&=\lambda_{\max}\big((A^{-1}\widetilde{M}-I)\big(I-\varPi_{\widetilde{M}}+(1-d_{2})\varPi_{\widetilde{M}}\big)\big)\\
&\geq\lambda_{\max}\big((A^{-1}\widetilde{M}-I)(I-\varPi_{\widetilde{M}})\big)+(1-d_{2})\lambda_{\min}\big((A^{-1}\widetilde{M}-I)\varPi_{\widetilde{M}}\big)\\
&=K_{\rm TG}-1
\end{align*}
and
\begin{align*}
t_{2}&=\lambda_{\max}\big((A^{-1}\widetilde{M}-I)(I-d_{2}\varPi_{\widetilde{M}})\big)\\
&\geq\lambda_{\max}(A^{-1}\widetilde{M}-I)-d_{2}\lambda_{\max}\big((A^{-1}\widetilde{M}-I)\varPi_{\widetilde{M}}\big)\\
&=\lambda_{\max}(A^{-1}\widetilde{M})-d_{2}\lambda_{\max}(A^{-1}\widetilde{M}\varPi_{\widetilde{M}})+d_{2}-1.
\end{align*}
Thus,
\begin{equation}\label{t2-low1}
t_{2}\geq\max\big\{K_{\rm TG},\,\lambda_{\max}(A^{-1}\widetilde{M})-d_{2}\lambda_{\max}(A^{-1}\widetilde{M}\varPi_{\widetilde{M}})+d_{2}\big\}-1.
\end{equation}

The estimate~\cref{iTGest1.1} then follows by combining~\cref{norm_tE_TG}, \cref{min-low-up}, \cref{max-low-up}, and~\cref{s1-up1}--\cref{t2-low1}.

\textit{Case}~2: $d_{1}\leq 1<d_{2}<\frac{\lambda_{\max}\left(A^{-1}\widetilde{M}\varPi_{\widetilde{M}}\right)}{\lambda_{\max}\left(A^{-1}\widetilde{M}\varPi_{\widetilde{M}}\right)-1}$. Since $d_{1}\leq 1$, the estimates~\cref{s1-up1} and~\cref{t1-up1} still hold. We next determine the lower bounds for $s_{2}$ and $t_{2}$. By~\cref{Weyl-min-low}, we have
\begin{align*}
s_{2}&=\lambda_{\min}\big((A^{-1}\widetilde{M}-I)\big(I-\varPi_{\widetilde{M}}+(1-d_{2})\varPi_{\widetilde{M}}\big)\big)\\
&\geq\lambda_{\min}\big((A^{-1}\widetilde{M}-I)(I-\varPi_{\widetilde{M}})\big)+(1-d_{2})\lambda_{\max}\big((A^{-1}\widetilde{M}-I)\varPi_{\widetilde{M}}\big),
\end{align*}
which, together with~\cref{eig1.1} and~\cref{eig1.4}, gives
\begin{equation}\label{s2-low2}
s_{2}\geq(1-d_{2})\lambda_{\max}(A^{-1}\widetilde{M}\varPi_{\widetilde{M}})+d_{2}-1.
\end{equation}
In light of~\cref{Weyl-max-low}, \cref{eig1.2}, and~\cref{eig1.3}, we have that
\begin{align*}
t_{2}&=\lambda_{\max}\big((A^{-1}\widetilde{M}-I)(I-d_{2}\varPi_{\widetilde{M}})\big)\\
&\geq\lambda_{\min}(A^{-1}\widetilde{M}-I)-d_{2}\lambda_{\min}\big((A^{-1}\widetilde{M}-I)\varPi_{\widetilde{M}}\big)\\
&=\lambda_{\min}(A^{-1}\widetilde{M})-1
\end{align*}
and
\begin{align*}
t_{2}&=\lambda_{\max}\big((A^{-1}\widetilde{M}-I)\big((1-d_{2})I+d_{2}(I-\varPi_{\widetilde{M}})\big)\big)\\
&\geq(1-d_{2})\lambda_{\max}(A^{-1}\widetilde{M}-I)+d_{2}\lambda_{\max}\big((A^{-1}\widetilde{M}-I)(I-\varPi_{\widetilde{M}})\big)\\
&=d_{2}K_{\rm TG}+(1-d_{2})\lambda_{\max}(A^{-1}\widetilde{M})-1.
\end{align*}
Then
\begin{equation}\label{t2-low2}
t_{2}\geq\max\big\{\lambda_{\min}(A^{-1}\widetilde{M}),\,d_{2}K_{\rm TG}+(1-d_{2})\lambda_{\max}(A^{-1}\widetilde{M})\big\}-1.
\end{equation}

Combining~\cref{norm_tE_TG}, \cref{min-low-up}, \cref{max-low-up}, \cref{s1-up1}, \cref{t1-up1}, \cref{s2-low2}, and~\cref{t2-low2}, we can arrive at the estimate~\cref{iTGest1.2}.

\textit{Case}~3: $1<d_{1}\leq d_{2}<\frac{\lambda_{\max}\left(A^{-1}\widetilde{M}\varPi_{\widetilde{M}}\right)}{\lambda_{\max}\left(A^{-1}\widetilde{M}\varPi_{\widetilde{M}}\right)-1}$. In such a case, the inequalities~\cref{s2-low2} and~\cref{t2-low2} are still valid. We then focus on the upper bounds for $s_{1}$ and $t_{1}$. By~\cref{Weyl-min-up}, \cref{eig1.1}, and~\cref{eig1.4}, we have that
\begin{align*}
s_{1}&=\lambda_{\min}\big((A^{-1}\widetilde{M}-I)(I-d_{1}\varPi_{\widetilde{M}})\big)\\
&\leq\lambda_{\max}(A^{-1}\widetilde{M}-I)-d_{1}\lambda_{\max}\big((A^{-1}\widetilde{M}-I)\varPi_{\widetilde{M}}\big)\\
&=\lambda_{\max}(A^{-1}\widetilde{M})-d_{1}\lambda_{\max}\big(A^{-1}\widetilde{M}\varPi_{\widetilde{M}}\big)+d_{1}-1
\end{align*}
and
\begin{align*}
s_{1}&=\lambda_{\min}\big((A^{-1}\widetilde{M}-I)\big((1-d_{1})I+d_{1}(I-\varPi_{\widetilde{M}})\big)\big)\\
&\leq(1-d_{1})\lambda_{\min}(A^{-1}\widetilde{M}-I)+d_{1}\lambda_{\min}\big((A^{-1}\widetilde{M}-I)(I-\varPi_{\widetilde{M}})\big)\\
&=(1-d_{1})\lambda_{\min}(A^{-1}\widetilde{M})+d_{1}-1.
\end{align*}
It follows that
\begin{equation}\label{s1-up2}
s_{1}\leq\min\big\{\lambda_{\max}(A^{-1}\widetilde{M})-d_{1}\lambda_{\max}\big(A^{-1}\widetilde{M}\varPi_{\widetilde{M}}\big),\,(1-d_{1})\lambda_{\min}(A^{-1}\widetilde{M})\big\}+d_{1}-1.
\end{equation}
Using~\cref{Weyl-max-up}, we obtain
\begin{align*}
t_{1}&=\lambda_{\max}\big((A^{-1}\widetilde{M}-I)\big(I-\varPi_{\widetilde{M}}+(1-d_{1})\varPi_{\widetilde{M}}\big)\big)\\
&\leq\lambda_{\max}\big((A^{-1}\widetilde{M}-I)(I-\varPi_{\widetilde{M}})\big)+(1-d_{1})\lambda_{\min}\big((A^{-1}\widetilde{M}-I)\varPi_{\widetilde{M}}\big),
\end{align*}
which, together with~\cref{eig1.2} and~\cref{eig1.3}, yields
\begin{equation}\label{t1-up2}
t_{1}\leq K_{\rm TG}-1.
\end{equation}

In view of~\cref{norm_tE_TG}, \cref{min-low-up}, \cref{max-low-up}, and~\cref{s2-low2}--\cref{t1-up2}, we conclude that the inequality~\cref{iTGest1.3} holds.
\end{proof}

\begin{remark}
If $B_{\rm c}=A_{\rm c}$, then $d_{1}=d_{2}=1$ and hence
\begin{align*}
\mathscr{L}_{1}&=1-\frac{1}{\max\big\{K_{\rm TG},\,\lambda_{\max}(A^{-1}\widetilde{M})-\lambda_{\max}(A^{-1}\widetilde{M}\varPi_{\widetilde{M}})+1\big\}},\\
\mathscr{U}_{1}&=1-\frac{1}{K_{\rm TG}}.
\end{align*}
By~\cref{Weyl-max-up}, \cref{eig1.2}, and~\cref{eig1.4}, we have
\begin{align*}
K_{\rm TG}+\lambda_{\max}(A^{-1}\widetilde{M}\varPi_{\widetilde{M}})-2&\geq\lambda_{\max}\big((A^{-1}\widetilde{M}-I)(I-\varPi_{\widetilde{M}})+(A^{-1}\widetilde{M}-I)\varPi_{\widetilde{M}}\big)\\
&=\lambda_{\max}(A^{-1}\widetilde{M})-1,
\end{align*}
that is,
\begin{displaymath}
K_{\rm TG}\geq\lambda_{\max}(A^{-1}\widetilde{M})-\lambda_{\max}(A^{-1}\widetilde{M}\varPi_{\widetilde{M}})+1,
\end{displaymath}
which leads to
\begin{displaymath}
\mathscr{L}_{1}=1-\frac{1}{K_{\rm TG}}.
\end{displaymath}
Thus, the estimate~\cref{iTGest1.1} will reduce to the identity~\cref{XZ} when $B_{\rm c}=A_{\rm c}$.
\end{remark}

\begin{remark}
If $B_{\rm c}=\omega I_{n_{\rm c}}$ with $\omega>0$, then
\begin{align*}
\lim_{\omega\rightarrow+\infty}d_{1}&=\lim_{\omega\rightarrow+\infty}\frac{1}{1+\lambda_{\max}\big((P^{T}\widetilde{M}P)^{-1}(\omega I_{n_{\rm c}}-A_{\rm c})\big)}=0,\\
\lim_{\omega\rightarrow+\infty}d_{2}&=\lim_{\omega\rightarrow+\infty}\frac{1}{1+\lambda_{\min}\big((P^{T}\widetilde{M}P)^{-1}(\omega I_{n_{\rm c}}-A_{\rm c})\big)}=0.
\end{align*}
It is easy to verify that, as $\omega\rightarrow+\infty$, both $\mathscr{L}_{1}$ and $\mathscr{U}_{1}$ tend to $1-\lambda_{\min}(\widetilde{M}^{-1}A)$, which is exactly the convergence factor of the limiting algorithm. That is, our estimate has fixed the defect of~\cref{Notay} indicated in~\cref{rek:Notay}.
\end{remark}

\begin{example}\label{example}
Let $A$ be partitioned into the two-by-two block form
\begin{equation}\label{2-by-2}
A=\begin{pmatrix}
A_{\rm ff} & A_{\rm fc} \\
A_{\rm cf} & A_{\rm cc}
\end{pmatrix},
\end{equation}
where $A_{\rm ff}\in\mathbb{R}^{n_{\rm f}\times n_{\rm f}}$, $A_{\rm fc}\in\mathbb{R}^{n_{\rm f}\times n_{\rm c}}$, $A_{\rm cf}=A_{\rm fc}^{T}$, and $A_{\rm cc}\in\mathbb{R}^{n_{\rm c}\times n_{\rm c}}$ $(n_{\rm f}+n_{\rm c}=n)$. The Cauchy--Bunyakowski--Schwarz (C.B.S.) constant associated with~\cref{2-by-2} (see, e.g.,~\cite{Eijkhout1991,Axelsson1994}) is defined as
\begin{displaymath}
\alpha:=\max_{\substack{\mathbf{v}_{\rm f}\in\mathbb{R}^{n_{\rm f}}\backslash\{0\} \\ \mathbf{v}_{\rm c}\in\mathbb{R}^{n_{\rm c}}\backslash\{0\}}}\frac{\mathbf{v}_{\rm f}^{T}A_{\rm fc}\mathbf{v}_{\rm c}}{\sqrt{\mathbf{v}_{\rm f}^{T}A_{\rm ff}\mathbf{v}_{\rm f}\cdot\mathbf{v}_{\rm c}^{T}A_{\rm cc}\mathbf{v}_{\rm c}}}=\big\|A_{\rm ff}^{-\frac{1}{2}}A_{\rm fc}A_{\rm cc}^{-\frac{1}{2}}\big\|_{2}.
\end{displaymath}
The positive definiteness of $A$ implies that $\alpha\in[0,1)$. Take
\begin{displaymath}
M=\begin{pmatrix}
A_{\rm ff} & 0 \\
0 & A_{\rm cc}
\end{pmatrix}, \quad P=\begin{pmatrix}
-A_{\rm ff}^{-1}A_{\rm fc} \\
I_{n_{\rm c}}
\end{pmatrix},
\end{displaymath}
and
\begin{displaymath}
B_{\rm c}=P_{0}^{T}AP_{0} \quad \text{with} \quad P_{0}=\begin{pmatrix}
0 \\
I_{n_{\rm c}}
\end{pmatrix}\in\mathbb{R}^{n\times n_{\rm c}}.
\end{displaymath}
Here, $M$ is an $A$-convergent smoother and $P$ is an ideal interpolation~\cite{Falgout2004,XXF2018}. Then, the iteration matrix $E_{\rm ITG}$ is of the form
\begin{displaymath}
E_{\rm ITG}=\begin{pmatrix}
\big(A_{\rm ff}^{-1}A_{\rm fc}A_{\rm cc}^{-1}A_{\rm cf}\big)^{2} & 0 \\
\ast & A_{\rm cc}^{-1}A_{\rm cf}A_{\rm ff}^{-1}A_{\rm fc}
\end{pmatrix}.
\end{displaymath}
Hence,
\begin{displaymath}
\|E_{\rm ITG}\|_{A}=\rho(E_{\rm ITG})=\big\|A_{\rm ff}^{-\frac{1}{2}}A_{\rm fc}A_{\rm cc}^{-\frac{1}{2}}\big\|_{2}^{2}=\alpha^{2}.
\end{displaymath}
It is easy to check that
\begin{displaymath}
K_{\rm TG}=\frac{1}{1-\alpha^{2}}, \quad \lambda_{\max}(A^{-1}\widetilde{M})=\frac{1}{1-\alpha^{2}}, \quad d_{1}=\frac{1}{1+\alpha^{2}}, \quad \text{and} \quad d_{2}\leq 1.
\end{displaymath}
An application of~\cref{iTGest1.1} yields
\begin{displaymath}
\|E_{\rm ITG}\|_{A}=\alpha^{2}.
\end{displaymath}
On the other hand, since
\begin{displaymath}
\lambda_{\min}(B_{\rm c}^{-1}A_{\rm c})=1-\alpha^{2} \quad \text{and} \quad \lambda_{\max}(B_{\rm c}^{-1}A_{\rm c})\leq 1,
\end{displaymath}
the upper bound in~\cref{Notay} gives $\alpha^{2}(2-\alpha^{2})$, which is strictly greater than $\alpha^{2}$ (unless $\alpha=0$). This example shows that the estimate~\cref{Notay} is not sharp. Furthermore, the relative error of the bound $\alpha^{2}(2-\alpha^{2})$ is
\begin{displaymath}
\bigg|\frac{\alpha^{2}(2-\alpha^{2})-\alpha^{2}}{\alpha^{2}}\bigg|=1-\alpha^{2},
\end{displaymath}
which is not tiny if $\alpha$ is bounded away from $1$.
\end{example}

\begin{remark}
The C.B.S. constant $\alpha$ can be expressed as
\begin{displaymath}
\alpha=\max_{\substack{\mathbf{v}_{\rm f}\in\mathbb{R}^{n_{\rm f}}\backslash\{0\} \\ \mathbf{v}_{\rm c}\in\mathbb{R}^{n_{\rm c}}\backslash\{0\}}}\frac{\mathbf{v}_{\rm f}^{T}S_{0}^{T}AP_{0}\mathbf{v}_{\rm c}}{\sqrt{\mathbf{v}_{\rm f}^{T}S_{0}^{T}AS_{0}\mathbf{v}_{\rm f}\cdot\mathbf{v}_{\rm c}^{T}P_{0}^{T}AP_{0}\mathbf{v}_{\rm c}}},
\end{displaymath}
where
\begin{displaymath}
S_{0}=\begin{pmatrix}
I_{n_{\rm f}} \\
0
\end{pmatrix}\in\mathbb{R}^{n\times n_{\rm f}} \quad \text{and} \quad P_{0}=\begin{pmatrix}
0 \\
I_{n_{\rm c}}
\end{pmatrix}\in\mathbb{R}^{n\times n_{\rm c}}.
\end{displaymath}
As a result, $\alpha$ can be viewed as the cosine of the abstract angle between $\Range(S_{0})$ and $\Range(P_{0})$ with respect to $A$-inner product, defined by $\langle\cdot,\cdot\rangle_{A}:=\langle A\cdot,\cdot\rangle$. As pointed out in~\cite{Notay2005}, for finite element matrices associated with the standard nodal basis, $\alpha$ is generally close to $1$, whereas, for hierarchical basis finite element matrices, $\alpha$ may be nicely bounded away from $1$; see, e.g.,~\cite{Margenov1994,Achchab1996,Achchab2001,Blaheta2003,Axelsson2004}.
\end{remark}

As an alternative to $(P^{T}\widetilde{M}P)^{-1}$, $P^{T}\widetilde{M}^{-1}P$ can also be used to analyze the convergence of~\cref{alg:TG}. It may be easier to compute $P^{T}\widetilde{M}^{-1}P$ in practice, because the action of $M^{-1}$ is always available. Indeed, there is a spectral equivalence relation between $(P^{T}\widetilde{M}P)^{-1}$ and $P^{T}\widetilde{M}^{-1}P$ (see, e.g.,~\cite[Lemma~5.2]{Falgout2004}), as discussed below.

Let $S$ be an $n\times(n-n_{\rm c})$ matrix, with full column rank, such that $P^{T}S=0$. This implies that $(S \ P)\in\mathbb{R}^{n\times n}$ is nonsingular. Let
\begin{displaymath}
L_{P}=\begin{pmatrix}
I_{n-n_{\rm c}} & 0 \\
-P^{T}\widetilde{M}S(S^{T}\widetilde{M}S)^{-1} & I_{n_{\rm c}}
\end{pmatrix}\begin{pmatrix}
0 \\
P^{T}P
\end{pmatrix}.
\end{displaymath}
Then
\begin{align*}
P^{T}\widetilde{M}^{-1}P&=P^{T}(S \ \ P)\big((S \ \ P)^{T}\widetilde{M}(S \ \ P)\big)^{-1}(S \ \ P)^{T}P\\
&=\big(0 \ \ P^{T}P\big)\begin{pmatrix}
S^{T}\widetilde{M}S & S^{T}\widetilde{M}P \\
P^{T}\widetilde{M}S & P^{T}\widetilde{M}P
\end{pmatrix}^{-1}\begin{pmatrix}
0 \\
P^{T}P
\end{pmatrix}\\
&=L_{P}^{T}\begin{pmatrix}
S^{T}\widetilde{M}S & 0 \\
0 & P^{T}\widetilde{M}P-P^{T}\widetilde{M}S(S^{T}\widetilde{M}S)^{-1}S^{T}\widetilde{M}P
\end{pmatrix}^{-1}L_{P}\\
&=P^{T}P\big(P^{T}\widetilde{M}P-P^{T}\widetilde{M}S(S^{T}\widetilde{M}S)^{-1}S^{T}\widetilde{M}P\big)^{-1}P^{T}P.
\end{align*}
Assume that the Cholesky factorization of $P^{T}P\in\mathbb{R}^{n_{\rm c}\times n_{\rm c}}$ is
\begin{displaymath}
P^{T}P=U_{\rm c}^{T}U_{\rm c},
\end{displaymath}
where $U_{\rm c}\in\mathbb{R}^{n_{\rm c}\times n_{\rm c}}$ is upper triangular. Let
\begin{displaymath}
P_{\sharp}=PU_{\rm c}^{-1},
\end{displaymath}
which is a \textit{normalized} prolongation, i.e., $P_{\sharp}^{T}P_{\sharp}=I_{n_{\rm c}}$. Then
\begin{displaymath}
\big(P_{\sharp}^{T}\widetilde{M}^{-1}P_{\sharp}\big)^{-1}=P_{\sharp}^{T}\widetilde{M}P_{\sharp}-P_{\sharp}^{T}\widetilde{M}S(S^{T}\widetilde{M}S)^{-1}S^{T}\widetilde{M}P_{\sharp},
\end{displaymath}
which yields
\begin{displaymath}
\big(P_{\sharp}^{T}\widetilde{M}P_{\sharp}\big)^{-1}\big(P_{\sharp}^{T}\widetilde{M}^{-1}P_{\sharp}\big)^{-1}=I_{n_{\rm c}}-\big(P_{\sharp}^{T}\widetilde{M}P_{\sharp}\big)^{-1}P_{\sharp}^{T}\widetilde{M}S(S^{T}\widetilde{M}S)^{-1}S^{T}\widetilde{M}P_{\sharp}.
\end{displaymath}
Thus,
\begin{equation}\label{eig-PMP}
\lambda\Big(\big(P_{\sharp}^{T}\widetilde{M}P_{\sharp}\big)^{-1}\big(P_{\sharp}^{T}\widetilde{M}^{-1}P_{\sharp}\big)^{-1}\Big)\subset[1-\beta^{2},\,1],
\end{equation}
where $\beta\in[0,1)$ is the C.B.S. constant associated with the matrix
\begin{displaymath}
\begin{pmatrix}
S^{T}\widetilde{M}S & S^{T}\widetilde{M}P_{\sharp} \\
P_{\sharp}^{T}\widetilde{M}S & P_{\sharp}^{T}\widetilde{M}P_{\sharp}
\end{pmatrix}.
\end{displaymath}
It follows from~\cref{eig-PMP} that
\begin{displaymath}
(1-\beta^{2})\mathbf{v}_{\rm c}^{T}P_{\sharp}^{T}\widetilde{M}^{-1}P_{\sharp}\mathbf{v}_{\rm c}\leq\mathbf{v}_{\rm c}^{T}\big(P_{\sharp}^{T}\widetilde{M}P_{\sharp}\big)^{-1}\mathbf{v}_{\rm c}\leq\mathbf{v}_{\rm c}^{T}P_{\sharp}^{T}\widetilde{M}^{-1}P_{\sharp}\mathbf{v}_{\rm c} \quad \forall\,\mathbf{v}_{\rm c}\in\mathbb{R}^{n_{\rm c}}.
\end{displaymath}
Some approaches to estimating the C.B.S. constant can be found, e.g., in~\cite{Eijkhout1991,Axelsson1994,Falgout2004,Vassilevski2008}.

\begin{remark}
We mention that the quantities $K_{\rm TG}$ and $\lambda_{\max}(A^{-1}\widetilde{M}\varPi_{\widetilde{M}})$ involved in~\cref{thm:iTG} will not change if $P$ is replaced by $P_{\sharp}$.
\end{remark}

\section{Convergence of multigrid methods} \label{sec:MG}

In practice, it is often too costly to solve the Galerkin coarse-grid system exactly when its size is relatively large. Instead, without essential loss of convergence speed, one may solve the coarse-grid system approximately. A typical strategy is to apply~\cref{alg:TG} recursively in the correction steps. The resulting multigrid algorithm can be treated as an inexact two-grid method. In this section, we establish a unified convergence theory for multigrid methods based on~\cref{thm:iTG}.

To describe the multigrid algorithm, we need the following notation and assumptions.

\begin{itemize}

\item The algorithm involves $L+1$ levels with indices $0,\ldots,L$, where $0$ corresponds to the coarsest level and $L$ to the finest level.

\item $n_{k}$ denotes the number of unknowns at level $k$ ($n=n_{L}>n_{L-1}>\cdots>n_{0}$).

\item For each $k=1,\ldots,L$, $P_{k}\in\mathbb{R}^{n_{k}\times n_{k-1}}$ denotes a prolongation matrix from level $k-1$ to level $k$, and ${\rm rank}(P_{k})=n_{k-1}$.

\item Let $A_{L}=A$. For each $k=0,\ldots,L-1$, $A_{k}:=P_{k+1}^{T}A_{k+1}P_{k+1}$ denotes the Galerkin coarse-grid matrix at level $k$.

\item Let $\hat{A}_{0}$ be an $n_{0}\times n_{0}$ matrix such that $\hat{A}_{0}-A_{0}$ is SPSD.

\item For each $k=1,\ldots,L$, $M_{k}\in\mathbb{R}^{n_{k}\times n_{k}}$ denotes a nonsingular smoother at level $k$ with $M_{k}+M_{k}^{T}-A_{k}$ being SPD.

\item $\gamma$ denotes the \textit{cycle index} involved in the coarse-grid correction steps.

\end{itemize}

With the above assumptions and an initial guess $\mathbf{u}_{k}^{(0)}\in\mathbb{R}^{n_{k}}$, the standard multigrid scheme for solving the linear system $A_{k}\mathbf{u}_{k}=\mathbf{f}_{k}$ (with $\mathbf{f}_{k}\in\mathbb{R}^{n_{k}}$) can be described by~\cref{alg:MG}. The symbol $\text{MG}^{\gamma}$ in~\cref{alg:MG} means that the multigrid scheme will be carried out $\gamma$ iterations. In particular, $\gamma=1$ corresponds to the V-cycle and $\gamma=2$ to the W-cycle.

\begin{algorithm}[!htbp]

\caption{\ Multigrid method at level $k$: $\mathbf{u}_{\rm IMG}\gets\textbf{MG}\big(k, A_{k}, \mathbf{f}_{k}, \mathbf{u}_{k}^{(0)}\big)$.} \label{alg:MG}

\smallskip

\begin{algorithmic}[1]

\State Presmoothing: $\mathbf{u}_{k}^{(1)}\gets\mathbf{u}_{k}^{(0)}+M_{k}^{-1}\big(\mathbf{f}_{k}-A_{k}\mathbf{u}_{k}^{(0)}\big)$

\smallskip

\State Restriction: $\mathbf{r}_{k-1}\gets P_{k}^{T}\big(\mathbf{f}_{k}-A_{k}\mathbf{u}_{k}^{(1)}\big)$

\smallskip

\State Coarse-grid correction: $\hat{\mathbf{e}}_{k-1}\gets\begin{cases}
\hat{A}_{0}^{-1}\mathbf{r}_{0} & \text{if $k=1$},\\
\textbf{MG}^{\gamma}\big(k-1, A_{k-1}, \mathbf{r}_{k-1}, \mathbf{0}\big) & \text{if $k>1$}.
\end{cases}$

\smallskip

\State Prolongation: $\mathbf{u}_{k}^{(2)}\gets\mathbf{u}_{k}^{(1)}+P_{k}\hat{\mathbf{e}}_{k-1}$

\smallskip

\State Postsmoothing: $\mathbf{u}_{\rm IMG}\gets\mathbf{u}_{k}^{(2)}+M_{k}^{-T}\big(\mathbf{f}_{k}-A_{k}\mathbf{u}_{k}^{(2)}\big)$

\smallskip

\end{algorithmic}

\end{algorithm}

From~\cref{alg:MG}, we have
\begin{displaymath}
\mathbf{u}_{k}-\mathbf{u}_{\rm IMG}=E_{\rm IMG}^{(k)}\big(\mathbf{u}_{k}-\mathbf{u}_{k}^{(0)}\big),
\end{displaymath}
where
\begin{equation}\label{tE-MGk}
E_{\rm IMG}^{(k)}=\big(I-M_{k}^{-T}A_{k}\big)\Big[I-P_{k}\Big(I-\big(E_{\rm IMG}^{(k-1)}\big)^{\gamma}\Big)A_{k-1}^{-1}P_{k}^{T}A_{k}\Big]\big(I-M_{k}^{-1}A_{k}\big).
\end{equation}
In particular,
\begin{displaymath}
E_{\rm IMG}^{(1)}=\big(I-M_{1}^{-T}A_{1}\big)\big(I-P_{1}\hat{A}_{0}^{-1}P_{1}^{T}A_{1}\big)\big(I-M_{1}^{-1}A_{1}\big).
\end{displaymath}
From~\cref{tE-MGk}, we deduce that
\begin{displaymath}
A_{k}^{\frac{1}{2}}E_{\rm IMG}^{(k)}A_{k}^{-\frac{1}{2}}=N_{k}^{T}\Big[I-A_{k}^{\frac{1}{2}}P_{k}A_{k-1}^{-\frac{1}{2}}\Big(I-\big(A_{k-1}^{\frac{1}{2}}E_{\rm IMG}^{(k-1)}A_{k-1}^{-\frac{1}{2}}\big)^{\gamma}\Big)A_{k-1}^{-\frac{1}{2}}P_{k}^{T}A_{k}^{\frac{1}{2}}\Big]N_{k},
\end{displaymath}
where
\begin{displaymath}
N_{k}=I-A_{k}^{\frac{1}{2}}M_{k}^{-1}A_{k}^{\frac{1}{2}}.
\end{displaymath}
By induction, one can get that $A_{k}^{\frac{1}{2}}E_{\rm IMG}^{(k)}A_{k}^{-\frac{1}{2}}$ is symmetric and
\begin{displaymath}
\lambda\big(E_{\rm IMG}^{(k)}\big)=\lambda\Big(A_{k}^{\frac{1}{2}}E_{\rm IMG}^{(k)}A_{k}^{-\frac{1}{2}}\Big)\subset[0,1) \quad \forall\,k=1,\ldots,L,
\end{displaymath}
which lead to
\begin{displaymath}
\big\|E_{\rm IMG}^{(k)}\big\|_{A_{k}}<1.
\end{displaymath}

Comparing~\cref{tE-MGk} with~\cref{tE_TG1}, we can observe that~\cref{alg:MG} is essentially an inexact two-grid method with $M=M_{k}$, $A=A_{k}$, $P=P_{k}$, and
\begin{equation}\label{Bc}
B_{\rm c}=A_{k-1}\Big(I-\big(E_{\rm IMG}^{(k-1)}\big)^{\gamma}\Big)^{-1}.
\end{equation}
It is easy to verify that $B_{\rm c}$ given by~\cref{Bc} is SPD.

Define
\begin{displaymath}
\sigma_{\rm TG}^{(k)}:=\big\|E_{\rm TG}^{(k)}\big\|_{A_{k}} \quad \text{and} \quad \sigma_{\rm IMG}^{(k)}:=\big\|E_{\rm IMG}^{(k)}\big\|_{A_{k}},
\end{displaymath}
which are the convergence factors of the exact two-grid method and inexact multigrid method at level $k$, respectively. By~\cref{Bc}, we have
\begin{align*}
B_{\rm c}-A_{\rm c}&=A_{k-1}\Big(I-\big(E_{\rm IMG}^{(k-1)}\big)^{\gamma}\Big)^{-1}-P_{k}^{T}A_{k}P_{k}\\
&=A_{k-1}^{\frac{1}{2}}\Big[I-\Big(A_{k-1}^{\frac{1}{2}}E_{\rm IMG}^{(k-1)}A_{k-1}^{-\frac{1}{2}}\Big)^{\gamma}\Big]^{-1}A_{k-1}^{\frac{1}{2}}-A_{k-1}\\
&=A_{k-1}^{\frac{1}{2}}\Big[I-\Big(A_{k-1}^{\frac{1}{2}}E_{\rm IMG}^{(k-1)}A_{k-1}^{-\frac{1}{2}}\Big)^{\gamma}\Big]^{-1}\Big(A_{k-1}^{\frac{1}{2}}E_{\rm IMG}^{(k-1)}A_{k-1}^{-\frac{1}{2}}\Big)^{\gamma}A_{k-1}^{\frac{1}{2}}.
\end{align*}
Define
\begin{displaymath}
\widetilde{M}_{k}:=M_{k}^{T}\big(M_{k}+M_{k}^{T}-A_{k}\big)^{-1}M_{k}.
\end{displaymath}
For any $\mathbf{v}_{k-1}\in\mathbb{R}^{n_{k-1}}\backslash\{0\}$, it holds that
\begin{align*}
\frac{\mathbf{v}_{k-1}^{T}(B_{\rm c}-A_{\rm c})\mathbf{v}_{k-1}}{\mathbf{v}_{k-1}^{T}P_{k}^{T}\widetilde{M}_{k}P_{k}\mathbf{v}_{k-1}}&=\frac{\mathbf{v}_{k-1}^{T}(B_{\rm c}-A_{\rm c})\mathbf{v}_{k-1}}{\mathbf{v}_{k-1}^{T}A_{k-1}\mathbf{v}_{k-1}}\cdot\frac{\mathbf{v}_{k-1}^{T}A_{k-1}\mathbf{v}_{k-1}}{\mathbf{v}_{k-1}^{T}P_{k}^{T}\widetilde{M}_{k}P_{k}\mathbf{v}_{k-1}}\\
&\in\Bigg[0,\,\lambda_{\max}\big((P_{k}^{T}\widetilde{M}_{k}P_{k})^{-1}A_{k-1}\big)\frac{\big(\sigma_{\rm IMG}^{(k-1)}\big)^{\gamma}}{1-\big(\sigma_{\rm IMG}^{(k-1)}\big)^{\gamma}}\Bigg],
\end{align*}
where we have used the fact
\begin{displaymath}
\lambda\Big(A_{k-1}^{\frac{1}{2}}E_{\rm IMG}^{(k-1)}A_{k-1}^{-\frac{1}{2}}\Big)\subset\Big[0,\,\sigma_{\rm IMG}^{(k-1)}\Big].
\end{displaymath}
Then
\begin{displaymath}
\frac{1}{1+\lambda_{\max}\big((P_{k}^{T}\widetilde{M}_{k}P_{k})^{-1}A_{k-1}\big)\frac{\big(\sigma_{\rm IMG}^{(k-1)}\big)^{\gamma}}{1-\big(\sigma_{\rm IMG}^{(k-1)}\big)^{\gamma}}}\leq d_{1}\leq d_{2}\leq 1,
\end{displaymath}
where $d_{1}$ and $d_{2}$ are defined by~\cref{d1} and~\cref{d2}, respectively. Since $\mathscr{L}_{1}$ and $\mathscr{U}_{1}$ are decreasing functions with respect to $d_{2}$ and $d_{1}$, respectively, it follows from~\cref{iTGest1.1} that
\begin{equation}\label{MG-lower}
\sigma_{\rm IMG}^{(k)}\geq 1-\frac{1}{K_{\rm TG}^{(k)}}=\sigma_{\rm TG}^{(k)}
\end{equation}
and
\begin{equation}\label{MG-upper}
\sigma_{\rm IMG}^{(k)}\leq 1-\frac{1+\lambda_{\max}\big((P_{k}^{T}\widetilde{M}_{k}P_{k})^{-1}A_{k-1}\big)\frac{\big(\sigma_{\rm IMG}^{(k-1)}\big)^{\gamma}}{1-\big(\sigma_{\rm IMG}^{(k-1)}\big)^{\gamma}}}{K_{\rm TG}^{(k)}+\lambda_{\max}(A_{k}^{-1}\widetilde{M}_{k})\lambda_{\max}\big((P_{k}^{T}\widetilde{M}_{k}P_{k})^{-1}A_{k-1}\big)\frac{\big(\sigma_{\rm IMG}^{(k-1)}\big)^{\gamma}}{1-\big(\sigma_{\rm IMG}^{(k-1)}\big)^{\gamma}}},
\end{equation}
where
\begin{equation}\label{K-TGk}
K_{\rm TG}^{(k)}=\max_{\mathbf{v}_{k}\in\mathbb{R}^{n_{k}}\backslash\{0\}}\frac{\big\|\big(I-\varPi_{\widetilde{M}_{k}}\big)\mathbf{v}_{k}\big\|_{\widetilde{M}_{k}}^{2}}{\|\mathbf{v}_{k}\|_{A_{k}}^{2}} \ \ \text{with} \ \ \varPi_{\widetilde{M}_{k}}=P_{k}(P_{k}^{T}\widetilde{M}_{k}P_{k})^{-1}P_{k}^{T}\widetilde{M}_{k}.
\end{equation}

\begin{remark}
The estimate~\cref{MG-lower} suggests that a well converged multigrid method entails that the corresponding exact two-grid method has a fast convergence speed.
\end{remark}

In what follows, we establish a convergence theory for~\cref{alg:MG} based on the estimate~\cref{MG-upper}. For brevity, we define
\begin{align}
\sigma_{\s L}&:=\max_{1\leq k\leq L}\sigma_{\rm TG}^{(k)},\label{sigL}\\
\tau_{\s L}&:=\max_{1\leq k\leq L}\lambda_{\max}\big((P_{k}^{T}\widetilde{M}_{k}P_{k})^{-1}A_{k-1}\big),\label{tauL}\\
\varepsilon_{\s L}&:=\min_{1\leq k\leq L}\lambda_{\min}(\widetilde{M}_{k}^{-1}A_{k}).\label{epsL}
\end{align}
In view of~\cref{K-TGk} and~\cref{epsL}, we have
\begin{displaymath}
K_{\rm TG}^{(k)}=\lambda_{\max}\big(A_{k}^{-1}\widetilde{M}_{k}\big(I-\varPi_{\widetilde{M}_{k}}\big)\big)\leq\lambda_{\max}(A_{k}^{-1}\widetilde{M}_{k})=\frac{1}{\lambda_{\min}(\widetilde{M}_{k}^{-1}A_{k})}\leq\frac{1}{\varepsilon_{\s L}}.
\end{displaymath}
Then
\begin{displaymath}
\sigma_{\rm TG}^{(k)}=1-\frac{1}{K_{\rm TG}^{(k)}}\leq 1-\varepsilon_{\s L} \quad \forall\,k=1,\ldots,L,
\end{displaymath}
which, together with~\cref{sigL}, yields
\begin{displaymath}
0\leq\sigma_{\s L}\leq 1-\varepsilon_{\s L}.
\end{displaymath}
Note that the extreme cases $\sigma_{\s L}=0$ and $\sigma_{\s L}=1-\varepsilon_{\s L}$ seldom occur in practice. In the subsequent analysis, we only consider the nontrivial case
\begin{equation}
0<\sigma_{\s L}<1-\varepsilon_{\s L}.
\end{equation}
In addition, we deduce from~\cref{tauL} that, for any $k=1,\ldots,L$,
\begin{displaymath}
\tau_{\s L}\geq\max_{\mathbf{v}_{k}\in{\rm range}(P_{k})\backslash\{0\}}\frac{\mathbf{v}_{k}^{T}A_{k}\mathbf{v}_{k}}{\mathbf{v}_{k}^{T}\widetilde{M}_{k}\mathbf{v}_{k}}\geq\min_{\mathbf{v}_{k}\in{\rm range}(P_{k})\backslash\{0\}}\frac{\mathbf{v}_{k}^{T}A_{k}\mathbf{v}_{k}}{\mathbf{v}_{k}^{T}\widetilde{M}_{k}\mathbf{v}_{k}}\geq\min_{\mathbf{v}_{k}\in\mathbb{R}^{n_{k}}\backslash\{0\}}\frac{\mathbf{v}_{k}^{T}A_{k}\mathbf{v}_{k}}{\mathbf{v}_{k}^{T}\widetilde{M}_{k}\mathbf{v}_{k}},
\end{displaymath}
which, combined with~\cref{epsL}, yields
\begin{equation}
0<\varepsilon_{\s L}\leq\tau_{\s L}.
\end{equation}

We first prove a technical lemma, which plays an important role in the convergence analysis of~\cref{alg:MG}.

\begin{lemma}\label{lem:MG}
Let $\sigma_{\s L}$, $\tau_{\s L}$, and $\varepsilon_{\s L}$ be defined by~\cref{sigL}, \cref{tauL}, and~\cref{epsL}, respectively. Then, there exists a strictly decreasing sequence $\{x_{\gamma}\}_{\gamma=1}^{+\infty}\subset(\sigma_{\s L},1-\varepsilon_{\s L})$ with limit $\sigma_{\s L}$ such that $x_{\gamma}$ is a root of the equation
\begin{displaymath}
\frac{\sigma_{\s L}\varepsilon_{\s L}(1-x^{\gamma})+\tau_{\s L}(1-\varepsilon_{\s L})(1-\sigma_{\s L})x^{\gamma}}{\varepsilon_{\s L}(1-x^{\gamma})+\tau_{\s L}(1-\sigma_{\s L})x^{\gamma}}-x=0 \quad (0<x<1).
\end{displaymath}
\end{lemma}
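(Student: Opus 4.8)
The plan is to recast the equation as a fixed-point problem for a single rational function of $x^{\gamma}$ whose monotonicity drives everything, and then to build the decreasing sequence by a nested application of the intermediate value theorem. I would first substitute $y=x^{\gamma}$ and introduce
\[
g(y):=\frac{\sigma_{\s L}\varepsilon_{\s L}(1-y)+\tau_{\s L}(1-\varepsilon_{\s L})(1-\sigma_{\s L})y}{\varepsilon_{\s L}(1-y)+\tau_{\s L}(1-\sigma_{\s L})y},
\]
so that the equation in the statement reads $g(x^{\gamma})=x$. Since $\varepsilon_{\s L}>0$ and $\tau_{\s L}(1-\sigma_{\s L})>0$, the denominator stays positive for $y\in[0,1]$, so $g$ is continuous there; a direct evaluation gives the two crucial boundary values $g(0)=\sigma_{\s L}$ and $g(1)=1-\varepsilon_{\s L}$. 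Writing $g$ as a linear-over-linear map, the numerator of $g'$ is $\varepsilon_{\s L}\tau_{\s L}(1-\sigma_{\s L})(1-\varepsilon_{\s L}-\sigma_{\s L})$, which is strictly positive exactly because of the standing hypothesis $\sigma_{\s L}<1-\varepsilon_{\s L}$. Hence $g$ is a strictly increasing continuous bijection of $[0,1]$ onto $[\sigma_{\s L},1-\varepsilon_{\s L}]$.

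For the base case $\gamma=1$, I set $F_{\gamma}(x):=g(x^{\gamma})-x$ on $[\sigma_{\s L},1-\varepsilon_{\s L}]$. Because $g$ is increasing with $g(0)=\sigma_{\s L}$ and $g(1)=1-\varepsilon_{\s L}$, the elementary inequalities $0<\sigma_{\s L}^{\gamma}$ and $(1-\varepsilon_{\s L})^{\gamma}<1$ immediately give $F_{\gamma}(\sigma_{\s L})>0>F_{\gamma}(1-\varepsilon_{\s L})$, and the intermediate value theorem yields a root $x_{1}\in(\sigma_{\s L},1-\varepsilon_{\s L})$.

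The inductive step is where the strict decrease is forced. Given a root $x_{\gamma}$ of $F_{\gamma}$ in $(\sigma_{\s L},1-\varepsilon_{\s L})$, I evaluate $F_{\gamma+1}$ at $x_{\gamma}$: since $0<x_{\gamma}<1$ gives $x_{\gamma}^{\gamma+1}<x_{\gamma}^{\gamma}$, monotonicity of $g$ together with the defining identity $x_{\gamma}=g(x_{\gamma}^{\gamma})$ yields $F_{\gamma+1}(x_{\gamma})=g(x_{\gamma}^{\gamma+1})-g(x_{\gamma}^{\gamma})<0$, while $F_{\gamma+1}(\sigma_{\s L})>0$ as before. A further application of the intermediate value theorem produces a root $x_{\gamma+1}\in(\sigma_{\s L},x_{\gamma})$. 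The resulting sequence $\{x_{\gamma}\}$ is therefore strictly decreasing and bounded below by $\sigma_{\s L}$, hence convergent to some $\ell\geq\sigma_{\s L}$. To pin down the limit I would use $0<x_{\gamma}<1-\varepsilon_{\s L}<1$, which forces $x_{\gamma}^{\gamma}\leq(1-\varepsilon_{\s L})^{\gamma}\to0$; letting $\gamma\to+\infty$ in $x_{\gamma}=g(x_{\gamma}^{\gamma})$ and invoking continuity of $g$ with $g(0)=\sigma_{\s L}$ gives $\ell=\sigma_{\s L}$.

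The main obstacle is less the existence of roots than the bookkeeping around their possible non-uniqueness: $F_{\gamma}$ need not be monotone, so the decreasing sequence cannot simply be read off and must instead be constructed through the nested-interval induction above, where the key leverage is that strict monotonicity of $g$ turns each step into a one-line sign comparison $g(x_{\gamma}^{\gamma+1})<g(x_{\gamma}^{\gamma})$. The two quantitative inputs that make the argument close are the boundary identities $g(0)=\sigma_{\s L}$, $g(1)=1-\varepsilon_{\s L}$ and the positivity of the $g'$ numerator; once these are in hand, the limit computation is routine via $x_{\gamma}^{\gamma}\to0$.
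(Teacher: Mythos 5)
Your proof is correct and follows essentially the same route as the paper's: sign evaluation at the endpoints $\sigma_{\s L}$ and $1-\varepsilon_{\s L}$ plus the intermediate value theorem for existence, monotonicity of the underlying rational map to force $F_{\gamma+1}(x_{\gamma})<0$ and hence the nested decreasing roots, and $x_{\gamma}^{\gamma}\to 0$ to identify the limit. The only difference is presentational: you substitute $y=x^{\gamma}$ and verify via the derivative numerator $\varepsilon_{\s L}\tau_{\s L}(1-\sigma_{\s L})(1-\varepsilon_{\s L}-\sigma_{\s L})>0$ the strict monotonicity that the paper merely asserts in the form ``$F_{\gamma}(x)+x$ is strictly increasing,'' which is a small but genuine improvement in completeness.
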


\begin{proof}
Let
\begin{displaymath}
F_{\gamma}(x)=\frac{\sigma_{\s L}\varepsilon_{\s L}(1-x^{\gamma})+\tau_{\s L}(1-\varepsilon_{\s L})(1-\sigma_{\s L})x^{\gamma}}{\varepsilon_{\s L}(1-x^{\gamma})+\tau_{\s L}(1-\sigma_{\s L})x^{\gamma}}-x.
\end{displaymath}
Obviously, $F_{\gamma}(x)$ is a continuous function in $(0,1)$. Direct computations yield
\begin{align*}
F_{\gamma}(\sigma_{\s L})&=\frac{\tau_{\s L}(1-\sigma_{\s L}-\varepsilon_{\s L})(1-\sigma_{\s L})\sigma_{\s L}^{\gamma}}{\varepsilon_{\s L}(1-\sigma_{\s L}^{\gamma})+\tau_{\s L}(1-\sigma_{\s L})\sigma_{\s L}^{\gamma}}>0,\\
F_{\gamma}(1-\varepsilon_{\s L})&=\frac{\varepsilon_{\s L}(1-\sigma_{\s L}-\varepsilon_{\s L})\big((1-\varepsilon_{\s L})^{\gamma}-1\big)}{\varepsilon_{\s L}-\varepsilon_{\s L}(1-\varepsilon_{\s L})^{\gamma}+\tau_{\s L}(1-\sigma_{\s L})(1-\varepsilon_{\s L})^{\gamma}}<0.
\end{align*}
Hence, $F_{\gamma}(x)=0$ has at least one root in $(\sigma_{\s L},1-\varepsilon_{\s L})$.

Let $x_{\gamma}\in(\sigma_{\s L},1-\varepsilon_{\s L})$ be a root of $F_{\gamma}(x)=0$. Note that $F_{\gamma}(x)+x$ is a strictly increasing function with respect to $x$. We then have
\begin{displaymath}
F_{\gamma+1}(x_{\gamma})=F_{\gamma}\Big(x_{\gamma}^{1+\frac{1}{\gamma}}\Big)+x_{\gamma}^{1+\frac{1}{\gamma}}-x_{\gamma}<F_{\gamma}(x_{\gamma})=0.
\end{displaymath}
Since $F_{\gamma+1}(\sigma_{\s L})>0$ and $F_{\gamma+1}(x_{\gamma})<0$, there exists an $x_{\gamma+1}\in(\sigma_{\s L},x_{\gamma})$ such that $F_{\gamma+1}(x_{\gamma+1})=0$. Repeating this process, one can obtain a strictly decreasing sequence $\{x_{\gamma}\}_{\gamma=1}^{+\infty}$.

Due to $F_{\gamma}(x_{\gamma})=0$, it follows that
\begin{displaymath}
x_{\gamma}=\frac{\sigma_{\s L}\varepsilon_{\s L}\big(1-x_{\gamma}^{\gamma}\big)+\tau_{\s L}(1-\varepsilon_{\s L})(1-\sigma_{\s L})x_{\gamma}^{\gamma}}{\varepsilon_{\s L}\big(1-x_{\gamma}^{\gamma}\big)+\tau_{\s L}(1-\sigma_{\s L})x_{\gamma}^{\gamma}},
\end{displaymath}
which leads to
\begin{displaymath}
\lim_{\gamma\rightarrow+\infty}x_{\gamma}=\sigma_{\s L}.
\end{displaymath}
This completes the proof.
\end{proof}

Using~\cref{iTGest1.1}, \cref{MG-upper}, and~\cref{lem:MG}, we can derive the following estimate.

\begin{theorem}\label{thm:MG}
Under the assumptions of~\cref{alg:MG} and~\cref{lem:MG}, if
\begin{equation}\label{condMG}
\lambda\big((P_{1}^{T}\widetilde{M}_{1}P_{1})^{-1}(\hat{A}_{0}-A_{0})\big)\subset\bigg[0,\,\frac{\varepsilon_{\s L}(x_{\gamma}-\sigma_{\s L})}{(1-\sigma_{\s L})(1-\varepsilon_{\s L}-x_{\gamma})}\bigg],
\end{equation}
then
\begin{equation}\label{estMG}
\sigma_{\rm IMG}^{(k)}\leq x_{\gamma} \quad \forall\,k=1,\ldots,L.
\end{equation}
\end{theorem}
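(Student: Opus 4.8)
The plan is to establish $\sigma_{\rm IMG}^{(k)}\le x_{\gamma}$ by induction on the level index $k$, exploiting the fact that the cutoff $x_{\gamma}$ from~\cref{lem:MG} is exactly the fixed point of the recursive upper bound once every level-dependent quantity is replaced by its worst-case global surrogate $\sigma_{\s L}$, $\tau_{\s L}$, $\varepsilon_{\s L}$. In both the base step and the inductive step the mechanism is the same: bound $\sigma_{\rm IMG}^{(k)}$ by a rational function of the deviation, push all data to their extremal admissible values by monotonicity, and observe that the resulting expression collapses onto the defining identity $F_{\gamma}(x_{\gamma})=0$.

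For the inductive step ($k\ge 2$) I would start from the recursive estimate~\cref{MG-upper} and regard its right-hand side as a function
\[
\Phi\big(s;K,\lambda,\tau\big)=1-\frac{1+\tau\frac{s^{\gamma}}{1-s^{\gamma}}}{K+\lambda\tau\frac{s^{\gamma}}{1-s^{\gamma}}},
\]
evaluated at $s=\sigma_{\rm IMG}^{(k-1)}$, $K=K_{\rm TG}^{(k)}$, $\lambda=\lambda_{\max}(A_{k}^{-1}\widetilde{M}_{k})$, and $\tau=\lambda_{\max}\big((P_{k}^{T}\widetilde{M}_{k}P_{k})^{-1}A_{k-1}\big)$. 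The first task is to verify the monotonicity of $\Phi$ in each argument: it is increasing in $K$ and in $\lambda$ unconditionally, and increasing in $\tau$ and in $s$ (through the increasing map $s\mapsto s^{\gamma}/(1-s^{\gamma})$) precisely because $K_{\rm TG}^{(k)}=\lambda_{\max}\big(A_{k}^{-1}\widetilde{M}_{k}(I-\varPi_{\widetilde{M}_{k}})\big)\le\lambda_{\max}(A_{k}^{-1}\widetilde{M}_{k})$. Invoking the inductive hypothesis $\sigma_{\rm IMG}^{(k-1)}\le x_{\gamma}$ together with $K_{\rm TG}^{(k)}\le 1/(1-\sigma_{\s L})$, $\lambda_{\max}(A_{k}^{-1}\widetilde{M}_{k})\le 1/\varepsilon_{\s L}$, and $\tau\le\tau_{\s L}$, these monotonicities yield $\sigma_{\rm IMG}^{(k)}\le\Phi\big(x_{\gamma};1/(1-\sigma_{\s L}),1/\varepsilon_{\s L},\tau_{\s L}\big)$. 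Clearing denominators in this last expression (multiplying the subtracted fraction by $(1-\sigma_{\s L})\varepsilon_{\s L}(1-x_{\gamma}^{\gamma})$ and simplifying) reduces it to $\frac{\sigma_{\s L}\varepsilon_{\s L}(1-x_{\gamma}^{\gamma})+\tau_{\s L}(1-\varepsilon_{\s L})(1-\sigma_{\s L})x_{\gamma}^{\gamma}}{\varepsilon_{\s L}(1-x_{\gamma}^{\gamma})+\tau_{\s L}(1-\sigma_{\s L})x_{\gamma}^{\gamma}}$, which equals $x_{\gamma}$ by the relation $F_{\gamma}(x_{\gamma})=0$; this closes the induction.

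For the base case ($k=1$) the coarse solver is $\hat{A}_{0}$, so I would apply~\cref{iTGest1.1} directly. Since $\hat{A}_{0}-A_{0}$ is SPSD one has $d_{2}\le 1$, which places us in Case~(i) of~\cref{thm:iTG}, and $d_{1}=(1+\delta_{1})^{-1}$ with $\delta_{1}:=\lambda_{\max}\big((P_{1}^{T}\widetilde{M}_{1}P_{1})^{-1}(\hat{A}_{0}-A_{0})\big)$. Because $\mathscr{U}_{1}$ is decreasing in $d_{1}$, hence increasing in $\delta_{1}$, I would substitute the largest value of $\delta_{1}$ permitted by~\cref{condMG}, namely $\tfrac{\varepsilon_{\s L}(x_{\gamma}-\sigma_{\s L})}{(1-\sigma_{\s L})(1-\varepsilon_{\s L}-x_{\gamma})}$, together with $K_{\rm TG}^{(1)}\le 1/(1-\sigma_{\s L})$ and $\lambda_{\max}(A_{1}^{-1}\widetilde{M}_{1})\le 1/\varepsilon_{\s L}$. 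A direct computation then shows that the resulting value of $\mathscr{U}_{1}$ is exactly $x_{\gamma}$ — in fact the right endpoint in~\cref{condMG} is reverse-engineered by setting this worst-case $\mathscr{U}_{1}$ equal to $x_{\gamma}$ and solving for $\delta_{1}$.

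The main obstacle I anticipate is the bookkeeping of the worst-case substitutions: since $\Phi$ depends rationally on four coupled quantities, one must verify each monotonicity direction separately and, importantly, perform the replacements in a safe order — first raise $K$ and $\lambda$ (whose monotonicities are unconditional), reaching the point $K^{*}=1/(1-\sigma_{\s L})\le 1/\varepsilon_{\s L}=\lambda^{*}$ guaranteed by $\varepsilon_{\s L}<1-\sigma_{\s L}$, and only then raise $\tau$ and $s$, whose monotonicities require $K\le\lambda$ at the evaluation point. The decisive structural fact underpinning every step is the inequality $K_{\rm TG}^{(k)}\le\lambda_{\max}(A_{k}^{-1}\widetilde{M}_{k})$, which secures monotonicity in the coarse-grid deviation; once this is in hand, the only remaining content is the algebraic collapse of the worst-case bound onto $F_{\gamma}(x_{\gamma})=0$, which is precisely the mechanism by which the single cutoff $x_{\gamma}$ propagates unchanged through all $L$ levels.
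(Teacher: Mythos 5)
Your proposal is correct and follows essentially the same route as the paper: induction on the level, with the base case handled by $\mathscr{U}_{1}$ from \cref{iTGest1.1} under \cref{condMG}, the inductive step handled by the recursion \cref{MG-upper} via monotonicity in each argument (secured by $K_{\rm TG}^{(k)}\leq\lambda_{\max}(A_{k}^{-1}\widetilde{M}_{k})$), and the algebra collapsing onto $F_{\gamma}(x_{\gamma})=0$. The only cosmetic difference is the order of the worst-case substitutions (you raise $K$ and $\lambda$ before $\tau$ and $s$, while the paper does the reverse); both orders are valid.
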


\begin{proof}
By~\cref{iTGest1.1} and~\cref{condMG}, we have
\begin{displaymath}
\sigma_{\rm IMG}^{(1)}\leq 1-\frac{1+\frac{\varepsilon_{\s L}(x_{\gamma}-\sigma_{\s L})}{(1-\sigma_{\s L})(1-\varepsilon_{\s L}-x_{\gamma})}}{\frac{1}{1-\sigma_{\rm TG}^{(1)}}+\lambda_{\max}(A_{1}^{-1}\widetilde{M}_{1})\frac{\varepsilon_{\s L}(x_{\gamma}-\sigma_{\s L})}{(1-\sigma_{\s L})(1-\varepsilon_{\s L}-x_{\gamma})}}.
\end{displaymath}
From~\cref{sigL} and~\cref{epsL}, we deduce that
\begin{displaymath}
\frac{1}{1-\sigma_{\rm TG}^{(1)}}\leq\frac{1}{1-\sigma_{\s L}} \quad \text{and} \quad \lambda_{\max}(A_{1}^{-1}\widetilde{M}_{1})=\frac{1}{\lambda_{\min}(\widetilde{M}_{1}^{-1}A_{1})}\leq\frac{1}{\varepsilon_{\s L}}.
\end{displaymath}
Hence,
\begin{displaymath}
\sigma_{\rm IMG}^{(1)}\leq 1-\frac{1+\frac{\varepsilon_{\s L}(x_{\gamma}-\sigma_{\s L})}{(1-\sigma_{\s L})(1-\varepsilon_{\s L}-x_{\gamma})}}{\frac{1}{1-\sigma_{\s L}}+\frac{x_{\gamma}-\sigma_{\s L}}{(1-\sigma_{\s L})(1-\varepsilon_{\s L}-x_{\gamma})}}=x_{\gamma}.
\end{displaymath}
It is easy to check that
\begin{displaymath}
1-\frac{1+\xi}{\frac{1}{1-\sigma_{\rm TG}^{(k)}}+\lambda_{\max}(A_{k}^{-1}\widetilde{M}_{k})\xi} \quad \text{and} \quad \frac{\eta}{1-\eta}
\end{displaymath}
are increasing functions with respect to $\xi\in(0,+\infty)$ and $\eta\in(0,1)$, respectively. If $\sigma_{\rm IMG}^{(k-1)}\leq x_{\gamma}$, we get from~\cref{MG-upper} and~\cref{sigL}--\cref{epsL} that
\begin{align*}
\sigma_{\rm IMG}^{(k)}&\leq 1-\frac{1+\max\limits_{1\leq k\leq L}\lambda_{\max}\big((P_{k}^{T}\widetilde{M}_{k}P_{k})^{-1}A_{k-1}\big)\frac{x_{\gamma}^{\gamma}}{1-x_{\gamma}^{\gamma}}}{\frac{1}{1-\sigma_{\rm TG}^{(k)}}+\lambda_{\max}(A_{k}^{-1}\widetilde{M}_{k})\max\limits_{1\leq k\leq L}\lambda_{\max}\big((P_{k}^{T}\widetilde{M}_{k}P_{k})^{-1}A_{k-1}\big)\frac{x_{\gamma}^{\gamma}}{1-x_{\gamma}^{\gamma}}}\\
&\leq 1-\frac{1+\tau_{\s L}\frac{x_{\gamma}^{\gamma}}{1-x_{\gamma}^{\gamma}}}{\frac{1}{1-\max\limits_{1\leq k\leq L}\sigma_{\rm TG}^{(k)}}+\max\limits_{1\leq k\leq L}\lambda_{\max}(A_{k}^{-1}\widetilde{M}_{k})\tau_{\s L}\frac{x_{\gamma}^{\gamma}}{1-x_{\gamma}^{\gamma}}}\\
&=1-\frac{1+\tau_{\s L}\frac{x_{\gamma}^{\gamma}}{1-x_{\gamma}^{\gamma}}}{\frac{1}{1-\sigma_{\s L}}+\frac{\tau_{\s L}}{\varepsilon_{\s L}}\frac{x_{\gamma}^{\gamma}}{1-x_{\gamma}^{\gamma}}}\\
&=\frac{\sigma_{\s L}\varepsilon_{\s L}\big(1-x_{\gamma}^{\gamma}\big)+\tau_{\s L}(1-\varepsilon_{\s L})(1-\sigma_{\s L})x_{\gamma}^{\gamma}}{\varepsilon_{\s L}\big(1-x_{\gamma}^{\gamma}\big)+\tau_{\s L}(1-\sigma_{\s L})x_{\gamma}^{\gamma}}=x_{\gamma},
\end{align*}
where, in the last equality, we have used the fact $F_{\gamma}(x_{\gamma})=0$. The desired result then follows by induction.
\end{proof}

\begin{remark}
Observe that a key relation in the proof of~\cref{thm:MG} is
\begin{equation}\label{sig1}
\sigma_{\rm IMG}^{(1)}\leq x_{\gamma}.
\end{equation}
The purpose of the condition~\cref{condMG} is to validate such an inequality. In fact, we are allowed to replace~\cref{condMG} by any condition which can validate~\cref{sig1}. For example, if
\begin{displaymath}
\lambda\big(\hat{A}_{0}^{-1}A_{0}\big)\subset\bigg[\frac{1-\varepsilon_{\s L}-x_{\gamma}}{1-\varepsilon_{\s L}-\sigma_{\s L}},\,1\bigg],
\end{displaymath}
one can show that~\cref{sig1} is still valid (see~\cite[Theorem~4.4]{XXF2022}).
\end{remark}

In particular, we have the following convergence estimates for the V- and W-cycle multigrid methods.

\begin{corollary}\label{cor:MG}
Let
\begin{displaymath}
\mu_{\s L}=1+\sigma_{\s L}-\frac{\tau_{\s L}(1-\varepsilon_{\s L})(1-\sigma_{\s L})}{\varepsilon_{\s L}}.
\end{displaymath}
Under the assumptions of~\cref{thm:MG}, it holds that, for any $k=1,\ldots,L$,
\begin{equation}\label{cor-estMG}
\sigma_{\rm IMG}^{(k)}\leq\begin{cases}
x_{1} &\text{if $\gamma=1$},\\
\hat{x}_{2} &\text{if $\gamma=2$},
\end{cases}
\end{equation}
where
\begin{align}
x_{1}&=\frac{2\sigma_{\s L}}{\mu_{\s L}+\sqrt{\mu_{\s L}^{2}-4\sigma_{\s L}\big(1-\frac{\tau_{\s L}}{\varepsilon_{\s L}}(1-\sigma_{\s L})\big)}},\label{x1}\\
\hat{x}_{2}&=\begin{cases}
\frac{2\sigma_{\s L}}{1+\sqrt{1-4\sigma_{\s L}(1-\sigma_{\s L}-\varepsilon_{\s L})}} & \text{if $\tau_{\s L}(1-\sigma_{\s L})=\varepsilon_{\s L}$},\\[8pt]
\frac{\sigma_{\s L}\varepsilon_{\s L}(1-x_{1}^{2})+\tau_{\s L}(1-\varepsilon_{\s L})(1-\sigma_{\s L})x_{1}^{2}}{\varepsilon_{\s L}(1-x_{1}^{2})+\tau_{\s L}(1-\sigma_{\s L})x_{1}^{2}} & \text{otherwise}.
\end{cases}\label{hatx2}
\end{align}
\end{corollary}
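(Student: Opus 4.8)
The plan is to read off the result from \cref{thm:MG}, which under the stated hypotheses already yields $\sigma_{\rm IMG}^{(k)}\leq x_{\gamma}$ for every $k$, where $x_{\gamma}\in(\sigma_{\s L},1-\varepsilon_{\s L})$ is the root of $F_{\gamma}(x)=0$ supplied by \cref{lem:MG}. The corollary is therefore purely a matter of rendering $x_{\gamma}$ explicit for $\gamma=1,2$, or bounding it explicitly when a closed form is unavailable. For the V-cycle I would clear the (positive) denominator in $F_{1}(x)=0$; since $x^{\gamma}=x$ when $\gamma=1$, the numerator $F_{1}(x)\cdot\big(\varepsilon_{\s L}(1-x)+\tau_{\s L}(1-\sigma_{\s L})x\big)$ is a quadratic, and dividing through by $\varepsilon_{\s L}>0$ puts it in the form
\begin{displaymath}
\Big(1-\tfrac{\tau_{\s L}(1-\sigma_{\s L})}{\varepsilon_{\s L}}\Big)x^{2}-\mu_{\s L}\,x+\sigma_{\s L}=0,
\end{displaymath}
once the linear coefficient is recognized as $-\mu_{\s L}$. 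Writing its root in the rationalized form $2\sigma_{\s L}/\big(\mu_{\s L}+\sqrt{\mu_{\s L}^{2}-4\sigma_{\s L}(1-\tfrac{\tau_{\s L}}{\varepsilon_{\s L}}(1-\sigma_{\s L}))}\big)$ gives exactly \cref{x1}; this form is preferable because it stays valid in the degenerate case $\tau_{\s L}(1-\sigma_{\s L})=\varepsilon_{\s L}$, where the leading coefficient vanishes and the discriminant collapses to $\mu_{\s L}^{2}$. That this rationalized root is the one lying in $(\sigma_{\s L},1-\varepsilon_{\s L})$ follows from the sign data $F_{1}(\sigma_{\s L})>0>F_{1}(1-\varepsilon_{\s L})$ recorded in \cref{lem:MG}: whether the parabola opens upward or downward, these signs single out the root given by \cref{x1}.

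For the W-cycle I would substitute $x^{\gamma}=x^{2}$ and again clear denominators, now obtaining the cubic
\begin{displaymath}
\big[\varepsilon_{\s L}-\tau_{\s L}(1-\sigma_{\s L})\big]x^{3}+\big[\tau_{\s L}(1-\varepsilon_{\s L})(1-\sigma_{\s L})-\sigma_{\s L}\varepsilon_{\s L}\big]x^{2}-\varepsilon_{\s L}\,x+\sigma_{\s L}\varepsilon_{\s L}=0.
\end{displaymath}
In the special case $\tau_{\s L}(1-\sigma_{\s L})=\varepsilon_{\s L}$ the cubic coefficient vanishes and the cubic factors as $\varepsilon_{\s L}\big[(1-\varepsilon_{\s L}-\sigma_{\s L})x^{2}-x+\sigma_{\s L}\big]=0$; solving this quadratic and selecting the root in $(\sigma_{\s L},1-\varepsilon_{\s L})$ produces the first branch of \cref{hatx2}, which is then an exact value, $x_{2}=\hat{x}_{2}$.

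In the generic case $\tau_{\s L}(1-\sigma_{\s L})\neq\varepsilon_{\s L}$ the cubic has no convenient closed-form root, so rather than solve for $x_{2}$ I would bound it using the structure already established in \cref{lem:MG}. Setting $G(x):=F_{2}(x)+x$ for the right-hand side of the fixed-point equation, \cref{lem:MG} furnishes two facts: $G$ is strictly increasing (this is precisely the monotonicity of $F_{\gamma}(x)+x$ exploited in that lemma's proof), and the sequence is strictly decreasing so that $x_{2}<x_{1}$. Since $x_{2}=G(x_{2})\leq G(x_{1})$ and $G(x_{1})$ is exactly the second branch $\hat{x}_{2}$ of \cref{hatx2}, combining with \cref{thm:MG} gives $\sigma_{\rm IMG}^{(k)}\leq x_{2}\leq\hat{x}_{2}$, which is \cref{cor-estMG} for $\gamma=2$. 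The main obstacle is this generic W-cycle case, where the defining equation is a genuine cubic with no tractable root formula; the resolution is the observation that one need not solve it at all, since monotonicity of $G$ together with the already-computed V-cycle root $x_{1}$ delivers the explicit (and only slightly conservative) bound $\hat{x}_{2}=G(x_{1})$. The remaining work---verifying the root placements in $(\sigma_{\s L},1-\varepsilon_{\s L})$ and the elementary algebraic simplifications---is routine.
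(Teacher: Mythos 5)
Your proposal is correct and follows essentially the same route as the paper: reduce $F_{1}(x)=0$ to a quadratic whose in-interval root is written in the rationalized form \cref{x1} (which absorbs the degenerate case $\tau_{\s L}(1-\sigma_{\s L})=\varepsilon_{\s L}$), solve the reduced quadratic exactly in the special W-cycle case, and in the generic W-cycle case bound $x_{2}$ by $F_{2}(x_{1})+x_{1}=\hat{x}_{2}$ using the strict monotonicity of $F_{\gamma}(x)+x$ together with $x_{2}<x_{1}$. The only cosmetic difference is that you obtain $x_{2}\leq G(x_{1})$ directly from $x_{2}=G(x_{2})$ and monotonicity, whereas the paper re-runs the intermediate value argument to locate a root below $G(x_{1})$; the content is identical.
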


\begin{proof}
Clearly, the equation
\begin{displaymath}
F_{1}(x)=\frac{\sigma_{\s L}\varepsilon_{\s L}(1-x)+\tau_{\s L}(1-\varepsilon_{\s L})(1-\sigma_{\s L})x}{\varepsilon_{\s L}(1-x)+\tau_{\s L}(1-\sigma_{\s L})x}-x=0 \quad (\sigma_{\s L}<x<1-\varepsilon_{\s L})
\end{displaymath}
has the same roots as
\begin{displaymath}
\big(\varepsilon_{\s L}-\tau_{\s L}(1-\sigma_{\s L})\big)x^{2}+\big(\tau_{\s L}(1-\varepsilon_{\s L})(1-\sigma_{\s L})-\varepsilon_{\s L}(1+\sigma_{\s L})\big)x+\sigma_{\s L}\varepsilon_{\s L}=0.
\end{displaymath}
If $\tau_{\s L}(1-\sigma_{\s L})=\varepsilon_{\s L}$, then the root of $F_{1}(x)=0$ is $x_{1}=\frac{\sigma_{\s L}}{\sigma_{\s L}+\varepsilon_{\s L}}$; otherwise, $x_{1}$ is of the form~\cref{x1}. Note that these two cases can be combined together.

Next, we consider the roots of
\begin{displaymath}
F_{2}(x)=\frac{\sigma_{\s L}\varepsilon_{\s L}(1-x^{2})+\tau_{\s L}(1-\varepsilon_{\s L})(1-\sigma_{\s L})x^{2}}{\varepsilon_{\s L}(1-x^{2})+\tau_{\s L}(1-\sigma_{\s L})x^{2}}-x=0 \quad (\sigma_{\s L}<x<x_{1}).
\end{displaymath}
Due to the fact that $F_{2}(x)+x$ is a strictly increasing function, it follows that
\begin{displaymath}
F_{2}(x)+x<F_{2}(x_{1})+x_{1}.
\end{displaymath}
Then
\begin{displaymath}
F_{2}(x)<\frac{\sigma_{\s L}\varepsilon_{\s L}(1-x_{1}^{2})+\tau_{\s L}(1-\varepsilon_{\s L})(1-\sigma_{\s L})x_{1}^{2}}{\varepsilon_{\s L}(1-x_{1}^{2})+\tau_{\s L}(1-\sigma_{\s L})x_{1}^{2}}-x,
\end{displaymath}
which yields
\begin{displaymath}
F_{2}\bigg(\frac{\sigma_{\s L}\varepsilon_{\s L}(1-x_{1}^{2})+\tau_{\s L}(1-\varepsilon_{\s L})(1-\sigma_{\s L})x_{1}^{2}}{\varepsilon_{\s L}(1-x_{1}^{2})+\tau_{\s L}(1-\sigma_{\s L})x_{1}^{2}}\bigg)<0.
\end{displaymath}
Since $F_{2}(\sigma_{\s L})>0$ and
\begin{displaymath}
\sigma_{\s L}<\frac{\sigma_{\s L}\varepsilon_{\s L}(1-x_{1}^{2})+\tau_{\s L}(1-\varepsilon_{\s L})(1-\sigma_{\s L})x_{1}^{2}}{\varepsilon_{\s L}(1-x_{1}^{2})+\tau_{\s L}(1-\sigma_{\s L})x_{1}^{2}}<F_{1}(x_{1})+x_{1}=x_{1},
\end{displaymath}
one can find a root $x_{2}$ satisfying that
\begin{displaymath}
x_{2}<\frac{\sigma_{\s L}\varepsilon_{\s L}(1-x_{1}^{2})+\tau_{\s L}(1-\varepsilon_{\s L})(1-\sigma_{\s L})x_{1}^{2}}{\varepsilon_{\s L}(1-x_{1}^{2})+\tau_{\s L}(1-\sigma_{\s L})x_{1}^{2}}.
\end{displaymath}
In particular, if $\tau_{\s L}(1-\sigma_{\s L})=\varepsilon_{\s L}$, then
\begin{displaymath}
x_{2}=\frac{2\sigma_{\s L}}{1+\sqrt{1-4\sigma_{\s L}(1-\sigma_{\s L}-\varepsilon_{\s L})}}.
\end{displaymath}
Thus, $\hat{x}_{2}$ given by~\cref{hatx2} is an upper bound for $x_{2}$. The estimate~\cref{cor-estMG} then follows from~\cref{thm:MG}.
\end{proof}

\begin{remark}
It is easy to see that $F_{2}(x)=0$ has the same roots as
\begin{displaymath}
\big(\varepsilon_{\s L}-\tau_{\s L}(1-\sigma_{\s L})\big)x^{3}+\big(\tau_{\s L}(1-\varepsilon_{\s L})(1-\sigma_{\s L})-\sigma_{\s L}\varepsilon_{\s L}\big)x^{2}-\varepsilon_{\s L}x+\sigma_{\s L}\varepsilon_{\s L}=0,
\end{displaymath}
which is a cubic equation if $\tau_{\s L}(1-\sigma_{\s L})\neq\varepsilon_{\s L}$. For the sake of brevity, we only give an upper bound for $x_{2}$ in~\cref{cor:MG}. Indeed, one can derive the precise expression of $x_{2}$ by using the well-known Cardano's formula.
\end{remark}

\section{Conclusions} \label{sec:con}

In this paper, we present a novel framework for analyzing the convergence of inexact two-grid methods, which is inspired by an explicit expression for the inexact two-grid preconditioner. Based on the analytical framework, we establish a unified convergence theory for multigrid methods, which allows the coarsest-grid system to be solved approximately. In the future, we expect to analyze other multilevel methods by using the proposed framework.

\section*{Acknowledgments}

This work is based on Xu's Ph.D.~thesis~\cite{XXF-thesis} at the Academy of Mathematics and Systems Science, Chinese Academy of Sciences. The authors would like to thank the anonymous referees for their valuable comments and suggestions, which greatly improved the original version of this paper.

\bibliographystyle{siamplain}
\bibliography{references}

\begin{thebibliography}{10}

\bibitem{Achchab2001}
{\sc B.~Achchab, O.~Axelsson, L.~Laayouni, and A.~Souissi}, {\em {Strengthened
  Cauchy--Bunyakow-ski--Schwarz inequality for a three-dimensional elasticity
  system}}, Numer. Linear Algebra Appl., 8 (2001), pp.~191--205.

\bibitem{Achchab1996}
{\sc B.~Achchab and J.~F. Ma\^{i}tre}, {\em {Estimate of the constant in two
  strengthened C.B.S. inequalities for F.E.M. systems of 2D elasticity:
  Application to multilevel methods and a posteriori error estimators}}, Numer.
  Linear Algebra Appl., 3 (1996), pp.~147--159.

\bibitem{Axelsson1994}
{\sc O.~Axelsson}, {\em {Iterative Solution Methods}}, Cambridge University
  Press, Cambridge, UK, 1994.

\bibitem{Axelsson2004}
{\sc O.~Axelsson and R.~Blaheta}, {\em {Two simple derivations of universal
  bounds for the C.B.S. inequality constant}}, Appl. Math., 49 (2004),
  pp.~57--72.

\bibitem{Babich2010}
{\sc R.~Babich, J.~Brannick, R.~C. Brower, M.~A. Clark, T.~A. Manteuffel, S.~F.
  McCormick, J.~C. Osborn, and C.~Rebbi}, {\em {Adaptive multigrid algorithm
  for the lattice Wilson--Dirac operator}}, Phys. Rev. Lett., 105 (2010),
  p.~201602.

\bibitem{Bank1988}
{\sc R.~E. Bank, T.~F. Dupont, and H.~Yserentant}, {\em {The hierarchical basis
  multigrid method}}, Numer. Math., 52 (1988), pp.~427--458.

\bibitem{Bienz2016}
{\sc A.~Bienz, R.~D. Falgout, W.~Gropp, L.~N. Olson, and J.~B. Schroder}, {\em
  {Reducing parallel communication in algebraic multigrid through
  sparsification}}, SIAM J. Sci. Comput., 38 (2016), pp.~S332--S357.

\bibitem{Blaheta2003}
{\sc R.~Blaheta}, {\em {Nested tetrahedral grids and strengthened C.B.S.
  inequality}}, Numer. Linear Algebra Appl., 10 (2003), pp.~619--637.

\bibitem{Brandt2000}
{\sc A.~E. Brandt}, {\em {General highly accurate algebraic coarsening}},
  Electron. Trans. Numer. Anal., 10 (2000), pp.~1--20.

\bibitem{Brannick2018}
{\sc J.~Brannick, F.~Cao, K.~Kahl, R.~D. Falgout, and X.~Hu}, {\em {Optimal
  interpolation and compatible relaxation in classical algebraic multigrid}},
  SIAM J. Sci. Comput., 40 (2018), pp.~A1473--A1493.

\bibitem{Brannick2016}
{\sc J.~Brannick, A.~Frommer, K.~Kahl, B.~Leder, M.~Rottmann, and A.~Strebel},
  {\em {Multigrid preconditioning for the overlap operator in lattice QCD}},
  Numer. Math., 132 (2016), pp.~463--490.

\bibitem{Briggs2000}
{\sc W.~L. Briggs, V.~E. Henson, and S.~F. McCormick}, {\em {A Multigrid
  Tutorial}}, 2nd ed., SIAM, Philadelphia, 2000.

\bibitem{Eijkhout1991}
{\sc V.~Eijkhout and P.~S. Vassilevski}, {\em {The role of the strengthened
  Cauchy--Buniakowskii--Schwarz inequality in multilevel methods}}, SIAM Rev.,
  33 (1991), pp.~405--419.

\bibitem{Falgout2014}
{\sc R.~D. Falgout and J.~B. Schroder}, {\em {Non-Galerkin coarse grids for
  algebraic multigrid}}, SIAM J. Sci. Comput., 36 (2014), pp.~C309--C334.

\bibitem{Falgout2004}
{\sc R.~D. Falgout and P.~S. Vassilevski}, {\em {On generalizing the algebraic
  multigrid framework}}, SIAM J. Numer. Anal., 42 (2004), pp.~1669--1693.

\bibitem{Falgout2005}
{\sc R.~D. Falgout, P.~S. Vassilevski, and L.~T. Zikatanov}, {\em {On two-grid
  convergence estimates}}, Numer. Linear Algebra Appl., 12 (2005),
  pp.~471--494.

\bibitem{Gee2011}
{\sc M.~W. Gee, U.~K\"{u}ttler, and W.~A. Wall}, {\em {Truly monolithic
  algebraic multigrid for fluid--structure interaction}}, Int. J. Numer. Meth.
  Engng, 85 (2011), pp.~987--1016.

\bibitem{Haber2007}
{\sc E.~Haber and S.~Heldmann}, {\em {An octree multigrid method for
  quasi-static Maxwell's equations with highly discontinuous coefficients}}, J.
  Comput. Phys., 223 (2007), pp.~783--796.

\bibitem{Hackbusch1985}
{\sc W.~Hackbusch}, {\em {Multi-Grid Methods and Applications}},
  Springer-Verlag, Berlin, 1985.

\bibitem{Horn2013}
{\sc R.~A. Horn and C.~R. Johnson}, {\em {Matrix Analysis}}, 2nd ed., Cambridge
  University Press, Cambridge, UK, 2013.

\bibitem{Margenov1994}
{\sc S.~D. Margenov}, {\em {Upper bound of the constant in the strengthened
  C.B.S. inequality for FEM 2D elasticity equations}}, Numer. Linear Algebra
  Appl., 1 (1994), pp.~65--74.

\bibitem{Notay2005}
{\sc Y.~Notay}, {\em {Algebraic multigrid and algebraic multilevel methods: A
  theoretical comparison}}, Numer. Linear Algebra Appl., 12 (2005),
  pp.~419--451.

\bibitem{Notay2007}
{\sc Y.~Notay}, {\em {Convergence analysis of perturbed two-grid and multigrid
  methods}}, SIAM J. Numer. Anal., 45 (2007), pp.~1035--1044.

\bibitem{Notay2015}
{\sc Y.~Notay}, {\em {Algebraic theory of two-grid methods}}, Numer. Math.
  Theory Methods Appl., 8 (2015), pp.~168--198.

\bibitem{Sherman1950}
{\sc J.~Sherman and W.~J. Morrison}, {\em {Adjustment of an inverse matrix
  corresponding to a change in one element of a given matrix}}, Ann. Math.
  Stat., 21 (1950), pp.~124--127.

\bibitem{Sterck2008}
{\sc H.~D. Sterck, R.~D. Falgout, J.~W. Nolting, and U.~M. Yang}, {\em
  {Distance-two interpolation for parallel algebraic multigrid}}, Numer. Linear
  Algebra Appl., 15 (2008), pp.~115--139.

\bibitem{Sterck2006}
{\sc H.~D. Sterck, U.~M. Yang, and J.~J. Heys}, {\em {Reducing complexity in
  parallel algebraic multigrid preconditioners}}, SIAM J. Matrix Anal. Appl.,
  27 (2006), pp.~1019--1039.

\bibitem{Treister2015}
{\sc E.~Treister and I.~Yavneh}, {\em {Non-Galerkin multigrid based on
  sparsified smoothed aggregation}}, SIAM J. Sci. Comput., 37 (2015),
  pp.~A30--A54.

\bibitem{Trottenberg2001}
{\sc U.~Trottenberg, C.~W. Oosterlee, and A.~Sch\"{u}ller}, {\em {Multigrid}},
  Academic Press, London, 2001.

\bibitem{Vassilevski2008}
{\sc P.~S. Vassilevski}, {\em {Multilevel Block Factorization Preconditioners:
  Matrix-Based Analysis and Algorithms for Solving Finite Element Equations}},
  Springer, New York, 2008.

\bibitem{Woodbury1950}
{\sc M.~A. Woodbury}, {\em {Inverting modified matrices}}, \rm in Memorandum
  Rept. 42, Statistical Research Group, Princeton University, Princeton, NJ,
  1950.

\bibitem{XZ2002}
{\sc J.~Xu and L.~T. Zikatanov}, {\em {The method of alternating projections
  and the method of subspace corrections in Hilbert space}}, J. Amer. Math.
  Soc., 15 (2002), pp.~573--597.

\bibitem{XZ2017}
{\sc J.~Xu and L.~T. Zikatanov}, {\em {Algebraic multigrid methods}}, Acta
  Numer., 26 (2017), pp.~591--721.

\bibitem{XXF-thesis}
{\sc X.~Xu}, {\em {Algebraic Theory of Multigrid Methods}}, Ph.D.~thesis,
  University of Chinese Academy of Sciences, 2019 (in chinese).

\bibitem{XXF2017}
{\sc X.~Xu}, {\em {Generalization of the Sherman--Morrison--Woodbury formula
  involving the Schur complement}}, Appl. Math. Comput., 309 (2017),
  pp.~183--191.

\bibitem{XXF2018}
{\sc X.~Xu and C.-S. Zhang}, {\em {On the ideal interpolation operator in
  algebraic multigrid methods}}, SIAM J. Numer. Anal., 56 (2018),
  pp.~1693--1710.

\bibitem{XXF2022}
{\sc X.~Xu and C.-S. Zhang}, {\em {Convergence analysis of inexact two-grid
  methods: A theoretical framework}}, SIAM J. Numer. Anal., 60 (2022),
  pp.~133--156.

\bibitem{Zikatanov2008}
{\sc L.~T. Zikatanov}, {\em {Two-sided bounds on the convergence rate of
  two-level methods}}, Numer. Linear Algebra Appl., 15 (2008), pp.~439--454.

\end{thebibliography}

\end{document}